\providecommand{\U}[1]{\protect\rule{.1in}{.1in}}
\newtheorem{theorem}{Theorem}
\newtheorem{corollary}[theorem]{Corollary}
\newtheorem{example}[theorem]{Example}
\newtheorem{lemma}[theorem]{Lemma}
\newtheorem{problem}[theorem]{Problem}
\newtheorem{proposition}[theorem]{Proposition}
\newtheorem{remark}[theorem]{Remark}
\newenvironment{proof}[1][Proof]{\noindent\textbf{#1.} }{\ \rule{0.5em}{0.5em}}
\definecolor{AnthonyComments}{HTML}{00BFFF}
\begin{document}
\sloppy

\title{Matrix monotonicity and concavity of the principal pivot transform}
\author{Kenneth Beard\footnote{Email: klathr2@lsu.edu}\\Louisiana State University\\Department of Mathematics\\Baton Rouge, LA, USA 70803\\\\ Aaron Welters\footnote{Email: awelters@fit.edu}\\Florida Institute of Technology\\Department of Mathematical Sciences\\Melbourne, FL, USA 32901}
\date{\today}

\maketitle

\begin{abstract}
We prove the (generalized) principal pivot transform is matrix monotone, in the sense of the L{\"o}wner ordering, under minimal hypotheses. This improves on the recent results of J.\ E.\ Pascoe and R.\ Tully-Doyle, \textit{Monotonicity of the principal pivot transform}, Linear Algebra Appl. 662 (2022) in two ways. First, we use the ``generalized" principal pivot transform, where matrix inverses in the classical definition of the principal pivot transform are replaced with Moore-Penrose pseudoinverses. Second, the hypotheses on matrices for which monotonicity holds is relaxed and, in particular, we find the weakest hypotheses possible for which it can be true. We also prove the principal pivot transform is a matrix convex function on positive semi-definite matrices that have the same kernel (and, in particular, on positive definite matrices). Our proof is a corollary of a minimization variational principle for the principal pivot transform.
\end{abstract}

\textit{Keywords:} Moore-Penrose pseudoinverse, generalized principal pivot transform, matrix inequalities, L{\"o}wner order, matrix monotone, matrix concave, variational principles, Schur complement, convex sets of Hermitian matrices, constant rank, matrix-valued Herglotz-Nevanlinna functions, sweep operator, exchange operator, gyration operator, partial inverse, Potapov-Ginzburg transform, Redheffer transform, chain-scattering transform\\
\indent\textit{2020 MSC:} 15A09, 47A56, 15A39, 15A10, 15B57, 15B48, 47B44, 47L07, 15A15, 90C33


\section{Introduction}

Suppose that $A\in M_n(\mathbb{F})$, where $\mathbb{F}=\mathbb{R}$ or $\mathbb{F}=\mathbb{C}$, is partitioned into a $2\times 2$ block matrix
\begin{gather}
    A=[A_{ij}]_{i,j=1,2}=\begin{bmatrix}
    A_{11} & A_{12}\\
    A_{21} & A_{22}
    \end{bmatrix},\label{notat:AMatrixPartitioned}
\end{gather}
and $A_{ij}\in M_{n_i\times n_j}(\mathbb{F})$ for each $i,j=1,2$.
Then the (generalized) principal pivot transform \cite{86AM, 00MT, 14KBa, 14KBb, 15SB, 22KJ} of $A$ with respect to the $(2,2)$-block $A_{22}$ is the matrix $\operatorname{ppt}(A)\in M_n(\mathbb{F})$ defined by  
\begin{gather}
    \operatorname{ppt}(A)=\begin{bmatrix}
    A/A_{22} & A_{12}A_{22}^{+}\\
    -A_{22}^{+}A_{21} & A_{22}^{+}
    \end{bmatrix},
\end{gather}
where $A/A_{22}\in M_{n_1}(\mathbb{F})$ is the (generalized) Schur complement \cite{74CH, 88BM, 05FZ} of $A$ with respect to the $(2,2)$-block $A_{22}$ defined by
\begin{gather}
    A/A_{22}=A_{11}-A_{12}A_{22}^{+}A_{21},
\end{gather}
and $A_{22}^+$ denotes the Moore-Penrose pseudoinverse \cite{03BG, 09CM} of $A_{22}$ (in particular, if $A_{22}$ is invertible then $A_{22}^+=A_{22}^{-1}$).  Note one could consider instead the generalized principal pivot transform and Schur complement of $A$ with respect to the $(1,1)$-block $A_{11}$, and our results will still hold under trivial modifications (cf.\ \cite[Sec.\ 4.4, Lemmas 38 and 40]{21AWa} and \cite[pp.\ 20-21, Sec.\ 1.1]{05FZ}).

Alternatively, we can define as in \cite{22PT}, the map $\operatorname{PPT}(A)\in M_{n}(\mathbb{F})$ as the``principal pivot transformation" of $A$ (or following the terminology in \cite{16PS}, it would be appropriate to refer to $\operatorname{PPT}(A)$ as the ``symmetric principal pivot transform" although we do not do so in this paper), where
\begin{gather}
    \operatorname{PPT}(A)=J\operatorname{ppt}(A)=\begin{bmatrix}
    A/A_{22} & A_{12}A_{22}^{+}\\
    A_{22}^{+}A_{21} & -A_{22}^{+}
    \end{bmatrix}
\end{gather}
and the signature matrix $J\in M_n(\mathbb{F})$ is defined to be the block matrix, partitioned conformally to the block structure of $A=[A_{ij}]_{i,j=1,2}$, by
\begin{gather}
    J=\begin{bmatrix}
    I_{n_1} & 0\\
    0 & -I_{n_2}
    \end{bmatrix}.
\end{gather}

In this paper, we study the matrix monotonicity and convexity of the map $\operatorname{PPT}(\cdot)=J\operatorname{ppt}(\cdot)$ in the sense of the L{\"o}wner ordering $\leq $, i.e., for two self-adjoint matrices $A^*=A,B^*=B\in M_n(\mathbb{F})$ we write $A\leq B$ if $B-A$ is a positive semi-definite matrix, i.e., $0\leq B-A$. Section \ref{sec:MainResultsMotivAndRelevWork} contains our statement of main results on this monotonicity (namely, Theorem \ref{thm:IntroMainThmPaperMonoGPPT}) and convexity (namely, Theorem \ref{thm:JPPTIsConcaveOnPosSemiDefMatricesWithSameKer}) of $\operatorname{PPT}(\cdot)=J\operatorname{ppt}(\cdot)$. In that same section, we will provide a proof of the convexity result using a minimization variational principle (namely, Theorem \ref{thm:PptMinVarPrinc}) for this map, but delay the proof of the monotonicity until Sec.\ \ref{sec:PrfMonoPPT}.

\section{Main results}\label{sec:MainResultsMotivAndRelevWork}
We begin by stating one of our main results of this paper, Theorem \ref{thm:IntroMainThmPaperMonoGPPT}, on the matrix monotonicity of the map $\operatorname{PPT}(\cdot)=J\operatorname{ppt}(\cdot)$ in the sense of the L{\"o}wner ordering $\leq $, but postpone the proof of this theorem until Sec.\ \ref{sec:PrfMonoPPT}.

The following will clarify some of the hypotheses (and notation) that we use. First, if $A^*=A=[A_{ij}]_{i,j=1,2}\in M_n(\mathbb{F})$, then $A_{ij}^*=A_{ji}$ for $i,j=1,2$ and $[J\operatorname{ppt}(A)]^*=J\operatorname{ppt}(A).$ If, in addition, $B^*=B=[B_{ij}]_{i,j=1,2}\in M_n(\mathbb{F})$, i.e., $B$ is partitioned conformal to $A=[A_{ij}]_{i,j=1,2}$ in (\ref{notat:AMatrixPartitioned}) [so that $A_{ij}, B_{ij}\in M_{n_i\times n_j}(\mathbb{F})$ for each $i,j=1,2$], and $A\leq B$ then $A_{22}\leq B_{22}$. Given these hypotheses, the next theorem gives us necessary and sufficient conditions to have $J\operatorname{ppt}(A)\leq J\operatorname{ppt}(B)$ in terms of the Moore-Penrose pseudoinverses $(A_{22})^+, (B_{22})^+$ of $A_{22}, B_{22},$ respectively, and the rank of the convex combination of the matrices $A_{22}$ and $B_{22}$, i.e., the function $ \operatorname{rank} [(1-t)A_{22}+tB_{22}]$ for $t\in [0,1]$ (see also Lemmas \ref{lem:MonotMoorePenosePseudoInv} and \ref{lem:MonotMPInvSpectralCharacterization}).

\begin{theorem}\label{thm:IntroMainThmPaperMonoGPPT}
     If $A^*=A=[A_{ij}]_{i,j=1,2}, B^*=B=[B_{ij}]_{i,j=1,2}\in M_n(\mathbb{F})$, and
\begin{gather}
        A\leq B
\end{gather}
then the following statements are equivalent:
\begin{itemize}
    \item[(a)] $J\operatorname{ppt}(A)\leq J\operatorname{ppt}(B)$,
    \item[(b)] $(B_{22})^+\leq (A_{22})^+$,
    \item[(c)] $ \operatorname{rank} [(1-t)A_{22}+tB_{22}]$ is constant for all $t\in [0,1]$.
\end{itemize}
Moreover, if any of these statements is true then
\begin{gather}
    A/A_{22}\leq B/B_{22}.\label{IntroMainThmPaperMonoSchurCompl}
\end{gather}
\end{theorem}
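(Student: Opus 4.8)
The plan is to establish the cycle of implications (a) $\Rightarrow$ (b) $\Leftrightarrow$ (c) $\Rightarrow$ (a) and to obtain the estimate (\ref{IntroMainThmPaperMonoSchurCompl}) as a byproduct. Two of the three links come for free. For (a) $\Rightarrow$ (b): the $(2,2)$-block of $J\operatorname{ppt}(X)$ equals $-(X_{22})^{+}$, so compressing the inequality $J\operatorname{ppt}(A)\le J\operatorname{ppt}(B)$ to the last $n_2$ coordinates gives $-(A_{22})^{+}\le -(B_{22})^{+}$, i.e.\ $(B_{22})^{+}\le (A_{22})^{+}$. Similarly, the $(1,1)$-block of $J\operatorname{ppt}(X)$ is $X/X_{22}$, so compressing $J\operatorname{ppt}(A)\le J\operatorname{ppt}(B)$ to the first $n_1$ coordinates gives (\ref{IntroMainThmPaperMonoSchurCompl}); thus once (a), (b), (c) are shown equivalent, the final assertion is immediate. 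The equivalence (b) $\Leftrightarrow$ (c) is exactly the L{\"o}wner-order monotonicity criterion for the Moore--Penrose pseudoinverse, applied to the self-adjoint pair $A_{22}\le B_{22}$ (self-adjoint and comparable by the remarks preceding the theorem); this is the content of Lemmas \ref{lem:MonotMoorePenosePseudoInv} and \ref{lem:MonotMPInvSpectralCharacterization}. So the real substance is the implication (c) $\Rightarrow$ (a).

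For (c) $\Rightarrow$ (a) I would integrate along the segment joining $A$ to $B$. Put $C_t=(1-t)A+tB$ and $D=B-A\ge 0$, so that $(C_t)_{ij}=(1-t)A_{ij}+tB_{ij}$ and $(C_t)_{22}=(1-t)A_{22}+tB_{22}$, with $D_{22}\ge 0$. The first (and crucial) step is to show that constancy of $\operatorname{rank}(C_t)_{22}$ on $[0,1]$ together with $D_{22}\ge 0$ forces the subspaces $\ker (C_t)_{22}$ and $\operatorname{ran}(C_t)_{22}$ to be independent of $t\in[0,1]$ and forces $\ker A_{22}\subseteq\ker D_{22}$. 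Constant rank alone does not pin down the kernel; this is where $A\le B$ is genuinely used. Indeed, if $v(t)$ is a real-analytic unit vector in $\ker (C_t)_{22}$ for all $t$ (such a choice is available since the rank of $(C_t)_{22}$, hence $\dim\ker (C_t)_{22}$, is constant on $[0,1]$), then differentiating $(C_t)_{22}v(t)=0$ and pairing with $v(t)$ kills the terms containing $(C_t)_{22}$ (as $v(t)^{*}(C_t)_{22}=0$) and leaves $v(t)^{*}D_{22}v(t)=0$; since $D_{22}\ge 0$ this gives $D_{22}v(t)=0$ and hence $A_{22}v(t)=0$, so $\ker(C_t)_{22}=\ker A_{22}\cap\ker D_{22}$ for every $t$. (A direct rank count, using the nonsingular corner of $(C_t)_{22}$ relative to the spectral decomposition of $A_{22}$, reaches the same conclusion.) Granting this, $t\mapsto (C_t)_{22}^{+}$ is real-analytic on $[0,1]$, and because its range and kernel no longer vary it obeys the ``naive'' product rule $\frac{d}{dt}(C_t)_{22}^{+}=-(C_t)_{22}^{+}D_{22}(C_t)_{22}^{+}$; in particular $t\mapsto\operatorname{PPT}(C_t)=J\operatorname{ppt}(C_t)$ is real-analytic on $[0,1]$.

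The remaining step is a block computation: differentiating the four blocks of $\operatorname{PPT}(C_t)$ and using the derivative of $(C_t)_{22}^{+}$ recorded above, everything collapses into the exact factorization
\begin{gather}
\frac{d}{dt}\operatorname{PPT}(C_t)=E_t\,D\,E_t^{*},\qquad
E_t=\begin{bmatrix} I_{n_1} & -(C_t)_{12}(C_t)_{22}^{+}\\ 0 & (C_t)_{22}^{+}\end{bmatrix},
\end{gather}
valid for each $t\in[0,1]$. Since $D=B-A\ge 0$, the right-hand side is positive semi-definite, so integrating over $[0,1]$ yields
\begin{gather}
\operatorname{PPT}(B)-\operatorname{PPT}(A)=\int_0^1 E_t\,D\,E_t^{*}\,dt \ge 0,
\end{gather}
which is (a); taking the $(1,1)$-block of this identity re-derives (\ref{IntroMainThmPaperMonoSchurCompl}).

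The step I expect to be the real obstacle is the first one above: promoting constancy of $\operatorname{rank}[(1-t)A_{22}+tB_{22}]$ on $[0,1]$ --- under the hypothesis $A\le B$ --- to constancy of the kernel and range of $(C_t)_{22}$, and thereby to differentiability of $(C_t)_{22}^{+}$ with the simple product-rule derivative. Without the L{\"o}wner hypothesis this breaks down and $\operatorname{PPT}(C_t)$ need not even be differentiable in $t$; this is precisely the phenomenon captured by Lemmas \ref{lem:MonotMoorePenosePseudoInv} and \ref{lem:MonotMPInvSpectralCharacterization}, while everything after it is elementary block algebra and the fundamental theorem of calculus. It is also why this argument sits naturally beside the minimization variational principle of Theorem \ref{thm:PptMinVarPrinc}: the positive-semi-definite integrand $E_t D E_t^{*}$ is the infinitesimal form of the statement that $\operatorname{PPT}$ preserves the L{\"o}wner order exactly along the directions in which it remains differentiable.
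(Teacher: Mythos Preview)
Your proposal is correct and takes a genuinely different route from the paper for the hard implication. The paper proves $(b)\Rightarrow(a)$ purely algebraically: Proposition~\ref{prop:KerRanConditionForJPPTMonoResult} establishes, through a lengthy block-matrix computation, the identity
\[
(B-A)/(B-A)_{22}
=[J\operatorname{ppt}(B)-J\operatorname{ppt}(A)]\big/[J\operatorname{ppt}(B)-J\operatorname{ppt}(A)]_{22},
\]
after which Albert's criterion (Lemma~\ref{lem:NecSuffCondPosMatrixUsingSchurCompl}), applied once to $B-A\ge 0$ and once to $J\operatorname{ppt}(B)-J\operatorname{ppt}(A)$ via Corollary~\ref{cor:NecSuffCondMatricesForMonotJPPT}, gives (a). Separately, the paper proves $(c)\Rightarrow(b)$ by treating $t\mapsto(1-t)A_{22}+tB_{22}$ as a matrix-valued Herglotz--Nevanlinna function and integrating $\frac{d}{dt}H(t)^{+}\le 0$. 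You instead bypass both steps and prove $(c)\Rightarrow(a)$ directly, by differentiating $\operatorname{PPT}(C_t)$ along the full segment and exhibiting the positive-semidefinite integrand $E_t\,D\,E_t^{*}$. This is closer in spirit to the Pascoe--Tully-Doyle argument that the paper is generalizing, and the factorization you write down is clean and checks out. What you give up is the explicit Schur-complement identity above, which the paper isolates as a result of independent interest and which compares $A$ and $B$ without any intermediate $C_t$; you also need a pinch of analytic perturbation theory (a Rellich-type analytic choice of $v(t)\in\ker(C_t)_{22}$) that the paper's algebraic route avoids entirely. One small point: your claim that $(b)\Leftrightarrow(c)$ is ``exactly'' the content of Lemmas~\ref{lem:MonotMoorePenosePseudoInv} and~\ref{lem:MonotMPInvSpectralCharacterization} is slightly quick---those lemmas do not state the equivalence outright, and the paper still restricts to the common range $(\ker A_{22})^{\perp}$ to reduce to the invertible case of Lemma~\ref{lem:MonotMPInvSpectralCharacterization}---but this is routine, and in any case your own kernel-constancy argument together with $\frac{d}{dt}(C_t)_{22}^{+}=-(C_t)_{22}^{+}D_{22}(C_t)_{22}^{+}\le 0$ already supplies $(c)\Rightarrow(b)$ as a byproduct.
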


This theorem is new at least in regard to the characterization of the monotonicity of the map $\operatorname{PPT}(\cdot)=J\operatorname{ppt}(\cdot)$ in the L{\"o}wner ordering $\leq $. The statement $(c)\Rightarrow (a)$ can be seen as a generalization of \cite[Theorem 1.1.(2)]{22PT}, where they prove this statement (using our notation) in the special case $\operatorname{rank} [(1-t)A_{22}+tB_{22}]=n_2$, i.e., $(1-t)A_{22}+tB_{22}$ is invertible, for all $t\in [0,1]$.  Another comparable result is that the conclusion of our theorem in regard to the monotonicity of the Schur complement following from our hypotheses and statement $(b)$ is a generalization of \cite[Lemma 2.3]{01CW}, since our result implies theirs, but our hypotheses are much weaker. As such, our theorem may be useful in their setting in which they considered monotonicity and comparison results for discrete-time algebraic Riccati equations \cite{95LR, 03AF} and associated Riccati operators (see, for instance, Theorems 1.1 and 1.2 and Lemma 3.1 in \cite{01CW}) based on the known connection between Riccati equations and Schur complements \cite{85GT, 90AT, 90AM, 01CW}. It should also be noted that monotonicity of the Schur complement for \textit{positive semi-definite} matrices, i.e., $0\leq A\leq B$ implies $0\leq A/A_{22}\leq B/B_{22}$, is well-known \cite{70EH, 71WA, 74CH, 00LM}. 

The following example illustrate our theorem (cf.\ Sec.\ 1, last para.\ in \cite{22PT}).
\begin{example}
Let $\mathbb{F}=\mathbb{R}$ or $\mathbb{F}=\mathbb{C}$. Consider the function $J\operatorname{ppt}(\cdot)$ on the following $2\times 2$ block matrices $A^*=A=[A_{ij}]_{i,j=1,2}, B^*=B=[B_{ij}]_{i,j=1,2}\in M_2(\mathbb{F})$,
\begin{gather}
    A=\left[\begin{array}{c;{2pt/2pt}c}
			A_{11}  & A_{12}  \\\hdashline[2pt/2pt]
			A_{21}  & A_{22} 
		\end{array}\right]=\left[\begin{array}{c;{2pt/2pt}c}
			0  & 0  \\\hdashline[2pt/2pt]
			0  & -1 
		\end{array}\right], B=\left[\begin{array}{c;{2pt/2pt}c}
			A_{11}  & A_{12}  \\\hdashline[2pt/2pt]
			A_{21}  & A_{22} 
		\end{array}\right]=\left[\begin{array}{c;{2pt/2pt}c}
			0  & 0  \\\hdashline[2pt/2pt]
			0  & 1 
		\end{array}\right],\\
  J\operatorname{ppt}(A)=\left[\begin{array}{c;{2pt/2pt}c}
			A/A_{22}  & A_{12}A_{22}^{+}  \\\hdashline[2pt/2pt]
			A_{22}^{+}A_{21}  & -A_{22}^{+} 
		\end{array}\right]=\left[\begin{array}{c;{2pt/2pt}c}
			0  & 0  \\\hdashline[2pt/2pt]
			0  & 1
		\end{array}\right],\\
    J\operatorname{ppt}(B)=\left[\begin{array}{c;{2pt/2pt}c}
			B/B_{22}  & B_{12}B_{22}^{+}  \\\hdashline[2pt/2pt]
			B_{22}^{+}B_{21}  & -B_{22}^{+} 
		\end{array}\right]=\left[\begin{array}{c;{2pt/2pt}c}
			0  & 0  \\\hdashline[2pt/2pt]
			0  & -1
		\end{array}\right].
\end{gather}
Then $A\leq B$ so that the hypotheses of Theorem \ref{thm:IntroMainThmPaperMonoGPPT} are satisfied and , in particular, we have $J\operatorname{ppt}(A)\nleq J\operatorname{ppt}(B)$, $B_{22}^+=B_{22}^{-1}=\begin{bmatrix}
    1
\end{bmatrix}\nleq \begin{bmatrix}
    -1
\end{bmatrix}=A_{22}^{-1}=A_{22}^{+}$,
and $\operatorname{rank} [(1-t)A_{22}+tB_{22}]$ is not constant on $[0,1]$ since at $t=0$ we have $\operatorname{rank} A_{22}=1$, whereas at $t=1/2$ we have $\operatorname{rank} [(1/2)A_{22}+(1/2)B_{22}]=0$.
\end{example}

We next establish the following result that $\operatorname{PPT}(A)=J\operatorname{ppt}(A)$ is itself a Schur complement. Although the result is easily proved as we will show, and is based on Proposition 42 in \cite{21AWa} and Lemma 74 in \cite{22KB}, it is useful as it allows us to apply the theory of Schur complements \cite{74CH,05FZ} to prove many important results on the principal pivot transform as we shall see.
\begin{proposition}\label{prop:GPPTIsASchurCompl}
    If $A=[A_{ij}]_{i,j=1,2}\in M_n(\mathbb{F})$ then $J\operatorname{ppt}(A)$ is the Schur complement
    \begin{gather}
    J\operatorname{ppt}(A)=\hat{A}/\hat{A}_{22},
\end{gather}
where $\hat{A}=[\hat{A}_{ij}]_{i,j=1,2}\in M_{n+n_2}(\mathbb{F})$ is the block matrix
\begin{gather}
    \hat{A}=\left[\begin{array}{c;{2pt/2pt}c}
			\hat{A}_{11}  & \hat{A}_{12}  \\\hdashline[2pt/2pt]
			\hat{A}_{21}  & \hat{A}_{22} 
		\end{array}\right]=\left[\begin{array}{c c;{2pt/2pt} c}
			A_{11} & 0 & A_{12} \\ 
			0 & 0 & -A_{22}^+A_{22}\\ \hdashline[2pt/2pt]
			A_{21} & -A_{22}A_{22}^+ & A_{22}
		\end{array}\right]
\end{gather}
    and $A_{22}A_{22}^+$ and $A_{22}^+A_{22}$ are the orthogonal projections of $\mathbb{F}^{n_2}$ onto $\operatorname{ran}(A_{22})$ and $\operatorname{ran}(A_{22}^*)$, respectively.
\end{proposition}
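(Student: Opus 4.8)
The plan is to verify the claimed identity by a direct computation of the Schur complement $\hat{A}/\hat{A}_{22}=\hat{A}_{11}-\hat{A}_{12}\hat{A}_{22}^{+}\hat{A}_{21}$ straight from the explicit block form of $\hat{A}$; this is essentially bookkeeping once the right block decomposition (from Proposition 42 in \cite{21AWa} and Lemma 74 in \cite{22KB}) is written down. First I would record the relevant blocks: $\hat{A}_{11}=\begin{bmatrix}A_{11}&0\\0&0\end{bmatrix}$, $\hat{A}_{12}=\begin{bmatrix}A_{12}\\-A_{22}^{+}A_{22}\end{bmatrix}$, $\hat{A}_{21}=\begin{bmatrix}A_{21}&-A_{22}A_{22}^{+}\end{bmatrix}$, and $\hat{A}_{22}=A_{22}$, so in particular $\hat{A}_{22}^{+}=A_{22}^{+}$ and no new pseudoinverse must be computed.

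Next I would expand the product $\hat{A}_{12}\hat{A}_{22}^{+}\hat{A}_{21}$. Multiplying $\hat{A}_{12}$ by $A_{22}^{+}$ on the right and using the Moore–Penrose identity $A_{22}^{+}A_{22}A_{22}^{+}=A_{22}^{+}$ (one of the four defining relations of the pseudoinverse; see \cite{03BG, 09CM}) gives $\hat{A}_{12}A_{22}^{+}=\begin{bmatrix}A_{12}A_{22}^{+}\\-A_{22}^{+}\end{bmatrix}$. Multiplying this on the right by $\hat{A}_{21}$ and again collapsing every occurrence of $A_{22}^{+}A_{22}A_{22}^{+}$ to $A_{22}^{+}$ yields $\hat{A}_{12}\hat{A}_{22}^{+}\hat{A}_{21}=\begin{bmatrix}A_{12}A_{22}^{+}A_{21}&-A_{12}A_{22}^{+}\\-A_{22}^{+}A_{21}&A_{22}^{+}\end{bmatrix}$. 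Subtracting from $\hat{A}_{11}$ and invoking the definition $A/A_{22}=A_{11}-A_{12}A_{22}^{+}A_{21}$ gives $\hat{A}/\hat{A}_{22}=\begin{bmatrix}A/A_{22}&A_{12}A_{22}^{+}\\A_{22}^{+}A_{21}&-A_{22}^{+}\end{bmatrix}=J\operatorname{ppt}(A)$, which is the asserted formula.

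Finally, the parenthetical statement that $A_{22}A_{22}^{+}$ and $A_{22}^{+}A_{22}$ are the orthogonal projections of $\mathbb{F}^{n_2}$ onto $\operatorname{ran}(A_{22})$ and $\operatorname{ran}(A_{22}^{*})$ is a standard property of the Moore–Penrose pseudoinverse: both products are self-adjoint and idempotent by the defining relations, $\operatorname{ran}(A_{22}A_{22}^{+})=\operatorname{ran}(A_{22})$, and $\operatorname{ran}(A_{22}^{+}A_{22})=\operatorname{ran}(A_{22}^{+})=\operatorname{ran}(A_{22}^{*})$; I would simply cite \cite{03BG, 09CM} for this. I do not expect a genuine obstacle here—the only points requiring care are keeping the order of factors straight (the pseudoinverse is not a two-sided inverse) and noting that $\hat{A}$ is self-adjoint whenever $A$ is, so that the Schur-complement formalism of \cite{74CH, 05FZ} applies cleanly when this proposition is later used.
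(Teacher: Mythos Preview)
Your proposal is correct and follows essentially the same approach as the paper: a direct block computation of $\hat{A}_{11}-\hat{A}_{12}\hat{A}_{22}^{+}\hat{A}_{21}$ using the Moore--Penrose identity $A_{22}^{+}A_{22}A_{22}^{+}=A_{22}^{+}$ to simplify each entry. The paper's proof is slightly terser (it does not separately discuss the projection remark, which is treated as part of the statement rather than something to be proved), but the substance is identical.
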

\begin{proof}
    By a direct calculation using block multiplication, we have
\begin{align*}
    \hat{A}/\hat{A}_{22}&=\hat{A}_{11}-\hat{A}_{12}\hat{A}_{22}^{+}\hat{A}_{21}=\begin{bmatrix}
            A_{11} & 0  \\ 
			0 & 0 & 
    \end{bmatrix}-\begin{bmatrix}
            A_{12}\\
            -A_{22}^+A_{22}
    \end{bmatrix}A_{22}^+\begin{bmatrix}
            A_{21} & -A_{22}A_{22}^+
    \end{bmatrix}\\
    &=\begin{bmatrix}
            A_{11}-A_{12}A_{22}^+A_{21} & A_{12}A_{22}^+A_{22}A_{22}^+  \\ 
			A_{22}^+A_{22}A_{22}^+A_{21} & -A_{22}^+A_{22}A_{22}^+A_{22}A_{22}^+ & 
    \end{bmatrix}=\begin{bmatrix}
            A/A_{22} & A_{12}A_{22}^+  \\ 
			A_{22}^+A_{21} & -A_{22}^+ & 
    \end{bmatrix}\\
    &=J\operatorname{ppt}(A).
\end{align*}
\end{proof}

For instance, from this proposition we can easily derive, using the theory of Schur complements, the following corollary that $\operatorname{PPT}(\cdot)=J\operatorname{ppt}(\cdot)$ maps matrices with positive semi-definite imaginary part to matrices with positive semi-definite imaginary part, which is a generalization of Theorem 1.1.(1) in \cite{22PT}. Recall, that if $A\in M_n(\mathbb{F})$ then its imaginary part $\operatorname{Im}(A)\in M_n(\mathbb{C})$ is defined as the self-adjoint matrix $\operatorname{Im}(A)=\frac{1}{2i}(A-A^*)$. We begin with a lemma.
\begin{lemma}\label{lem:ImPartSchurComplementFormula}
     If $A=[A_{ij}]_{i,j=1,2}\in M_n(\mathbb{F})$ and $A_{22}$ is an EP matrix, i.e., $A_{22}A_{22}^+=A_{22}^+A_{22},$
    then
    \begin{gather}
        A/A_{22}=\begin{bmatrix}
                I_{n_1}\\
                -A_{22}^+A_{21}
        \end{bmatrix}^*A\begin{bmatrix}
                I_{n_1}\\
                -A_{22}^+A_{21}
        \end{bmatrix},\\
        \operatorname{Im}(A/A_{22})=\begin{bmatrix}
                I_{n_1}\\
                -A_{22}^+A_{21}
        \end{bmatrix}^*\operatorname{Im}(A)\begin{bmatrix}
                I_{n_1}\\
                -A_{22}^+A_{21}
        \end{bmatrix}.
    \end{gather}
    In particular, if $0\leq \operatorname{Im}A$ then $A_{22}$ is an EP matrix and
    \begin{gather}
        0\leq \operatorname{Im}(A/A_{22}).
    \end{gather}
\end{lemma}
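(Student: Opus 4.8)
The plan is to establish the two displayed identities first, and then deduce the ``in particular'' claim. For the first identity, I would expand the right-hand side using block multiplication. Writing $T=\begin{bmatrix} I_{n_1}\\ -A_{22}^+A_{21}\end{bmatrix}$, we have $T^*AT = A_{11} - A_{12}A_{22}^+A_{21} - A_{21}^*(A_{22}^+)^*A_{21} + A_{21}^*(A_{22}^+)^*A_{22}A_{22}^+A_{21}$, using $A_{ij}^*=A_{ji}$. The EP hypothesis $A_{22}A_{22}^+=A_{22}^+A_{22}$, together with the standard pseudoinverse identities (in particular $(A_{22}^+)^* = (A_{22}^*)^+$, $A_{22}^+A_{22}A_{22}^+=A_{22}^+$, and the fact that for an EP matrix $A_{22}^+$ commutes with $A_{22}A_{22}^+$), should collapse the last two terms. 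The key observation is that for an EP matrix, $\operatorname{ran}(A_{22})=\operatorname{ran}(A_{22}^*)$, so $A_{22}A_{22}^+=A_{22}^+A_{22}=(A_{22}^+)^*A_{22}^*=A_{22}^*(A_{22}^+)^*$, whence $(A_{22}^+)^*A_{22}A_{22}^+A_{21} = (A_{22}^+)^*A_{22}^+A_{22}A_{21}$ — but I actually want to show the sum of the last two terms vanishes, i.e. $A_{21}^*(A_{22}^+)^*A_{21} = A_{21}^*(A_{22}^+)^*A_{22}A_{22}^+A_{21}$. Since $A_{22}A_{22}^+$ is the orthogonal projection onto $\operatorname{ran}(A_{22})=\operatorname{ran}(A_{22}^*)$ and $(A_{22}^+)^*=(A_{22}^*)^+$ has range contained in $\operatorname{ran}(A_{22}^*)$ (hence in $\operatorname{ran}(A_{22})$ by EP), we get $A_{22}A_{22}^+(A_{22}^+)^* = (A_{22}^+)^*$; taking adjoints and using $A_{22}A_{22}^+$ self-adjoint gives exactly the needed cancellation. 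So $T^*AT = A_{11}-A_{12}A_{22}^+A_{21} = A/A_{22}$.

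The second identity is then purely formal: for any matrix $M$ and any $T$ (with compatible sizes), $\operatorname{Im}(T^*MT) = \frac{1}{2i}(T^*MT - T^*M^*T) = T^*\operatorname{Im}(M)\,T$, so it follows immediately from the first identity applied to $M=A$. Note this step does not even use the EP hypothesis beyond what was already needed for identity one.

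For the ``in particular'' part, assume $0\leq \operatorname{Im}A$. I first need $A_{22}$ to be EP. Observe that $A_{22}$ is the compression $\begin{bmatrix}0 & I_{n_2}\end{bmatrix} A \begin{bmatrix}0\\ I_{n_2}\end{bmatrix}$, so $\operatorname{Im}(A_{22}) = \begin{bmatrix}0 & I_{n_2}\end{bmatrix}\operatorname{Im}(A)\begin{bmatrix}0\\ I_{n_2}\end{bmatrix} \geq 0$. A square matrix with positive semi-definite imaginary part is necessarily EP: indeed, if $0\le\operatorname{Im}(A_{22})$ then $A_{22}x=0$ implies $0 = x^*\operatorname{Im}(A_{22})x$... actually the cleaner route is that $\ker(A_{22})\subseteq\ker(\operatorname{Im}A_{22})$ and $\ker(A_{22})\subseteq\ker(\operatorname{Re}A_{22})$: if $A_{22}x=0$ then $x^*A_{22}x=0$ so $x^*\operatorname{Re}(A_{22})x = x^*\operatorname{Im}(A_{22})x = 0$, and since both parts are Hermitian with $\operatorname{Im}(A_{22})\geq 0$ this forces $\operatorname{Im}(A_{22})x=0$ (a Hermitian psd matrix kills $x$ iff $x^*(\cdot)x=0$) and then $\operatorname{Re}(A_{22})x=0$, so $A_{22}^*x = (\operatorname{Re}A_{22} - i\operatorname{Im}A_{22})x = 0$; thus $\ker(A_{22})\subseteq\ker(A_{22}^*)$, and by dimension count these kernels are equal, which is exactly the EP condition $\ker A_{22}=\ker A_{22}^*$ (equivalently $\operatorname{ran}A_{22}=\operatorname{ran}A_{22}^*$, equivalently $A_{22}A_{22}^+=A_{22}^+A_{22}$). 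Having established $A_{22}$ is EP, the second identity applies and gives $\operatorname{Im}(A/A_{22}) = T^*\operatorname{Im}(A)\,T \geq 0$ since $\operatorname{Im}(A)\geq 0$ and a congruence $T^*(\cdot)T$ preserves positive semi-definiteness.

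The main obstacle is the first identity — specifically, carefully managing the pseudoinverse identities under the EP hypothesis so that the two ``extra'' terms in the expansion of $T^*AT$ cancel; everything else is either a one-line formal manipulation or a standard linear-algebra fact about matrices with psd imaginary part.
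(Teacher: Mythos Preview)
Your proposal is correct and follows essentially the same route as the paper: expand $T^*AT$ by block multiplication, use the EP hypothesis to rewrite $A_{22}A_{22}^+$ as $A_{22}^+A_{22}$ so that the extra terms collapse via the standard identity $A_{22}^+A_{22}A_{22}^+=A_{22}^+$, then deduce the imaginary-part formula formally and the ``in particular'' claim from the congruence. One small slip to fix: you write ``using $A_{ij}^*=A_{ji}$'', but $A$ is \emph{not} assumed self-adjoint in this lemma, so that relation is false in general; fortunately your displayed expansion does not actually rely on it (the $A_{21}^*(A_{22}^+)^*$ factors come from $T^*=[\,I\ \ -(A_{22}^+A_{21})^*\,]$, not from any symmetry of $A$), so just delete that justification.
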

\begin{proof}
    Assume the hypotheses. Then
\begin{gather*}
    \begin{bmatrix}
                I_{n_1}\\
                -A_{22}^+A_{21}
        \end{bmatrix}^*A\begin{bmatrix}
                I_{n_1}\\
                -A_{22}^+A_{21}
        \end{bmatrix}=\begin{bmatrix}
                I_{n_1} & (-A_{22}^+A_{21})^*
        \end{bmatrix}\begin{bmatrix}
    A_{11} & A_{12}\\
    A_{21} & A_{22}
    \end{bmatrix}\begin{bmatrix}
                I_{n_1}\\
                -A_{22}^+A_{21}
        \end{bmatrix}\\
    =A/A_{22}-A_{21}^*[(I_{n_2}-A_{22}^+A_{22})A_{22}^+]^*A_{21}=A/A_{22}.
\end{gather*}
The proof of the lemma now follows immediately from this and the fact that if $0\leq \operatorname{Im}A$ then $0\leq \operatorname{Im}(A_{22})$ implying $A_{22}$ is an EP matrix.
\end{proof}

\begin{corollary}
 If $A=[A_{ij}]_{i,j=1,2}\in M_n(\mathbb{F})$ and $A_{22}$ is an EP matrix then
    \begin{gather}
        J\operatorname{ppt}(A)=\begin{bmatrix}
                I_{n}\\
                -\hat{A}_{22}^+\hat{A}_{21}
        \end{bmatrix}^*\hat{A}\begin{bmatrix}
                I_{n}\\
                -\hat{A}_{22}^+\hat{A}_{21}
        \end{bmatrix},\\
        \operatorname{Im}[J\operatorname{ppt}(A)]=\begin{bmatrix}
                I_{n_1} & 0\\
                -A_{22}^+A_{21} & A_{22}^+
        \end{bmatrix}^*\operatorname{Im}(A)\begin{bmatrix}
                I_{n_1} & 0\\
                -A_{22}^+A_{21} & A_{22}^+
        \end{bmatrix}.
    \end{gather}
     In particular, if $0\leq \operatorname{Im}(A)$ then 
     \begin{gather}
         0\leq \operatorname{Im}[J\operatorname{ppt}(A)].
     \end{gather}
\end{corollary}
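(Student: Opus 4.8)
The plan is to reduce the statement to two facts already established: Proposition~\ref{prop:GPPTIsASchurCompl}, which identifies $J\operatorname{ppt}(A)$ with the Schur complement $\hat A/\hat A_{22}$, and Lemma~\ref{lem:ImPartSchurComplementFormula}, which gives quadratic-form representations of a Schur complement and of its imaginary part whenever the pivot block is an EP matrix. Since $\hat A_{22}=A_{22}$ by construction, the hypothesis that $A_{22}$ is EP is exactly what is needed to apply the lemma to $\hat A\in M_{n+n_2}(\mathbb{F})$. Observing that the $(1,1)$-block $\hat A_{11}$ of $\hat A$ has size $n$, the lemma applied to $\hat A$ yields the first displayed identity of the corollary verbatim, with $I_n$ in place of $I_{n_1}$ and $\hat A,\hat A_{21},\hat A_{22}$ in place of $A,A_{21},A_{22}$; no further work is needed for that formula.

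For the second formula I would first simplify the ``symbol'' $\hat A_{22}^+\hat A_{21}$. Using $\hat A_{21}=\begin{bmatrix}A_{21} & -A_{22}A_{22}^+\end{bmatrix}$, $\hat A_{22}^+=A_{22}^+$, and the Moore--Penrose identity $A_{22}^+A_{22}A_{22}^+=A_{22}^+$, one gets $\hat A_{22}^+\hat A_{21}=\begin{bmatrix}A_{22}^+A_{21} & -A_{22}^+\end{bmatrix}$, so that
\[
\begin{bmatrix}I_n\\ -\hat A_{22}^+\hat A_{21}\end{bmatrix}=\begin{bmatrix}I_{n_1} & 0\\ 0 & I_{n_2}\\ -A_{22}^+A_{21} & A_{22}^+\end{bmatrix},
\]
an $(n+n_2)\times n$ matrix with block rows of sizes $n_1,n_2,n_2$. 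The key step is then to compute $\operatorname{Im}(\hat A)$: because $A_{22}A_{22}^+$ and $A_{22}^+A_{22}$ are orthogonal projections they are self-adjoint, and because $A_{22}$ is EP they are equal, so in $\hat A-\hat A^*$ the $(2,3)$- and $(3,2)$-blocks (in the natural $3\times 3$ partition of $\hat A$) are $A_{22}A_{22}^+-A_{22}^+A_{22}=0$. Hence $\operatorname{Im}(\hat A)$ is precisely $\operatorname{Im}(A)$ with a zero block row and block column of size $n_2$ inserted in the middle. Substituting this into the imaginary-part identity of Lemma~\ref{lem:ImPartSchurComplementFormula} and carrying out the block multiplication, the middle block row and column of $\operatorname{Im}(\hat A)$ annihilate the middle block row of the matrix displayed above, leaving exactly $M^*\operatorname{Im}(A)M$ with $M=\begin{bmatrix}I_{n_1} & 0\\ -A_{22}^+A_{21} & A_{22}^+\end{bmatrix}$; this is the second formula.

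Finally, for the ``in particular'' clause, if $0\le\operatorname{Im}(A)$ then its compression $\operatorname{Im}(A_{22})=[\operatorname{Im}(A)]_{22}$ is positive semi-definite, which forces $A_{22}$ to be an EP matrix (the same fact used in the proof of Lemma~\ref{lem:ImPartSchurComplementFormula}); so the standing hypothesis is automatically in force, and the second formula exhibits $\operatorname{Im}[J\operatorname{ppt}(A)]=M^*\operatorname{Im}(A)M$ as a $*$-congruence of the positive semi-definite matrix $\operatorname{Im}(A)$, whence $0\le\operatorname{Im}[J\operatorname{ppt}(A)]$.

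I do not expect a serious obstacle here; the only point requiring care is the bookkeeping that collapses the $(n+n_2)$-dimensional quadratic form over $\operatorname{Im}(\hat A)$ down to the $n$-dimensional one over $\operatorname{Im}(A)$, and the single genuine use of the EP hypothesis is precisely the cancellation of the off-diagonal projection blocks of $\operatorname{Im}(\hat A)$ that makes this collapse clean.
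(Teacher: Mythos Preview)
Your proposal is correct and follows essentially the same approach as the paper: both reduce to Proposition~\ref{prop:GPPTIsASchurCompl} and Lemma~\ref{lem:ImPartSchurComplementFormula} applied to $\hat A$, and both hinge on the observation that, under the EP hypothesis, $\operatorname{Im}(\hat A)$ is just $\operatorname{Im}(A)$ with a zero block row and column inserted in the middle. The only presentational difference is that the paper packages this observation as the factorization $\operatorname{Im}(\hat A)=C^*\operatorname{Im}(A)\,C$ with $C=\left[\begin{smallmatrix}I_{n_1}&0&0\\0&0&I_{n_2}\end{smallmatrix}\right]$ and then notes $C\left[\begin{smallmatrix}I_n\\-\hat A_{22}^+\hat A_{21}\end{smallmatrix}\right]=M$, whereas you compute $\operatorname{Im}(\hat A)$ entrywise and carry out the block multiplication directly; your version has the advantage of making explicit exactly where the EP condition $A_{22}A_{22}^+=A_{22}^+A_{22}$ is used (in cancelling the $(2,3)$- and $(3,2)$-blocks of $\hat A-\hat A^*$), which the paper leaves implicit.
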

\begin{proof}
    The proof follows immediately from Proposition \ref{prop:GPPTIsASchurCompl} and Lemma \ref{lem:ImPartSchurComplementFormula} by observing that
    \begin{gather*}
        \operatorname{Im}(\hat A)=C^*\operatorname{Im}(A)C,\;
        C\begin{bmatrix}
                I_{n}\\
                -\hat{A}_{22}^+\hat{A}_{21}
        \end{bmatrix}
        =\begin{bmatrix}
                I_{n_1} & 0\\
                -A_{22}^+A_{21} & A_{22}^+
        \end{bmatrix},
    \end{gather*}
    where $C\in M_{n\times (n+n_2)}(\mathbb{F})$ is the block matrix defined by
    \begin{gather*}
        C=\left[\begin{array}{ccc}
			I_{n_1}  & 0 & 0  \\
			0  & 0& I_{n_2} 
			\end{array}\right].
    \end{gather*}
\end{proof}

Another instance of the usefulness of Proposition \ref{prop:GPPTIsASchurCompl} begins with recalling the following well-known variational principle for the Schur complement \cite{00LM,05FZ, 22KW} with a short proof of it.
\begin{lemma}\label{lem:SchurCompMinPrinc}
If $A^*=A=[A_{ij}]_{i,j=1,2}\in M_n(\mathbb{F})$, $0\leq A_{22}$, and $\ker A_{22}\subseteq \ker A_{12}$ then $A/A_{22}$ is the unique self-adjoint matrix satisfying the minimization principle:
	\begin{gather}
		\left(x_1,A/A_{22}x_1\right)=\min_{x_2\in \mathbb{F}^{n_2}}\left( \begin{bmatrix}x_1\\x_2 \end{bmatrix}, A\begin{bmatrix}x_1\\x_2 \end{bmatrix} \right),
	\end{gather}
	for all $x_1\in \mathbb{F}^{n_1}$. Furthermore, for each $x_1\in \mathbb{F}^{n_1}$, the set of minimizers is
	\begin{gather}
	    \{-A_{22}^+A_{21}x_1\}+\ker{A_{22}}.
	\end{gather}
\end{lemma}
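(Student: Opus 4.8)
The plan is to reduce the minimization to a finite-dimensional quadratic optimization by completing the square. Fix $x_1 \in \mathbb{F}^{n_1}$ and define the quadratic function $q(x_2) = \left( \begin{bmatrix} x_1 \\ x_2 \end{bmatrix}, A \begin{bmatrix} x_1 \\ x_2 \end{bmatrix} \right)$ on $\mathbb{F}^{n_2}$. Expanding using the block structure of $A$ and the fact that $A^* = A$ (so $A_{21} = A_{12}^*$ and $A_{22}^* = A_{22}$), one gets
\[
q(x_2) = (x_1, A_{11} x_1) + 2\operatorname{Re}(x_2, A_{21} x_1) + (x_2, A_{22} x_2).
\]
The key algebraic identity to establish, valid under the hypothesis $\ker A_{22} \subseteq \ker A_{12}$ (equivalently $\operatorname{ran} A_{21} \subseteq \operatorname{ran} A_{22}$, using $A_{21} = A_{12}^*$ and $\operatorname{ran} A_{22} = \operatorname{ran} A_{22}^* = (\ker A_{22})^\perp$ since $A_{22}$ is self-adjoint hence EP), is the completion of the square
\[
q(x_2) = (x_1, (A/A_{22}) x_1) + (x_2 + A_{22}^+ A_{21} x_1,\; A_{22}(x_2 + A_{22}^+ A_{21} x_1)).
\]
Verifying this identity is the main computational step: expanding the right-hand side produces a cross term $2\operatorname{Re}(x_2, A_{22} A_{22}^+ A_{21} x_1)$, and one needs $A_{22} A_{22}^+ A_{21} = A_{21}$, which follows precisely because $\operatorname{ran} A_{21} \subseteq \operatorname{ran} A_{22}$ and $A_{22} A_{22}^+$ is the orthogonal projection onto $\operatorname{ran} A_{22}$; one also needs the constant term $-(A_{22}^+ A_{21} x_1, A_{22} A_{22}^+ A_{21} x_1) = -(A_{21} x_1, A_{22}^+ A_{21} x_1) = -(x_1, A_{12} A_{22}^+ A_{21} x_1)$, again using $A_{22} A_{22}^+ A_{21} = A_{21}$ together with $A_{22}^+ A_{22} A_{22}^+ = A_{22}^+$.

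Once the identity is in hand, the conclusion is immediate from $0 \le A_{22}$: the second term on the right is $\ge 0$ for every $x_2$, so $q(x_2) \ge (x_1,(A/A_{22})x_1)$, with equality if and only if $x_2 + A_{22}^+ A_{21} x_1 \in \ker A_{22}$ (here I use that for a positive semi-definite matrix $M$, $(y, My) = 0 \iff My = 0 \iff y \in \ker M$). Choosing $x_2 = -A_{22}^+ A_{21} x_1$ attains the minimum, which proves the minimization principle, and the equality analysis shows the minimizer set is exactly $\{-A_{22}^+ A_{21} x_1\} + \ker A_{22}$. Finally, uniqueness of $A/A_{22}$ as a self-adjoint matrix with this property follows from the polarization identity: a self-adjoint matrix $S$ on $\mathbb{F}^{n_1}$ is determined by the values $(x_1, S x_1)$ for all $x_1$.

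I expect the only real obstacle to be bookkeeping: making sure the pseudoinverse identities ($A_{22} A_{22}^+ A_{21} = A_{21}$ and $A_{22}^+ A_{22} A_{22}^+ = A_{22}^+$) are invoked exactly where needed and that the hypothesis $\ker A_{22} \subseteq \ker A_{12}$ is correctly translated into the range condition $\operatorname{ran} A_{21} \subseteq \operatorname{ran} A_{22}$ — this translation uses self-adjointness of $A_{22}$ (so it is EP, with $\operatorname{ran} A_{22} = \operatorname{ran} A_{22}^*$ the orthogonal complement of $\ker A_{22}$) and the relation $A_{21} = A_{12}^*$. No deep input is required beyond the elementary theory of the Moore--Penrose pseudoinverse and positive semi-definite quadratic forms.
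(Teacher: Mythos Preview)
Your proposal is correct and follows essentially the same approach as the paper: both arguments amount to completing the square in the quadratic form $q(x_2)$, using the identity $A_{22}A_{22}^{+}A_{21}=A_{21}$ that comes from the hypothesis $\ker A_{22}\subseteq\ker A_{12}$. The only cosmetic difference is that the paper packages the completion of the square as an application of the generalized Aitken block-diagonalization formula (Lemma~\ref{lem:GenLem6_1In01CW}), whereas you expand and verify the identity by hand.
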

\begin{proof}
Assume the hypotheses. Then by Lemma \ref{lem:GenLem6_1In01CW} it follows that, for any $(x_1,x_2)\in\mathbb{F}^{n_1}\times \mathbb{F}^{n_2}$,
\begin{gather*}
    \left (\begin{bmatrix}x_1\\x_2\end{bmatrix}A\begin{bmatrix}x_1\\x_2 \end{bmatrix}\right)=\left(\begin{bmatrix}
	            I_{n_1} & 0\\
	            A_{22}^+A_{21} & I_{n_2}
	    \end{bmatrix}\begin{bmatrix}x_1\\x_2\end{bmatrix},\begin{bmatrix}
	            A/A_{22} & 0\\
	            0 & A_{22}
	    \end{bmatrix}\begin{bmatrix}
	            I_{n_1} & 0\\
	            A_{22}^+A_{21} & I_{n_2}
	    \end{bmatrix}\begin{bmatrix}x_1\\x_2\end{bmatrix}\right)\\
	    =(x_1,A/A_{22}x_1)+((A_{22}^+A_{21}x_1+x_2),A_{22}(A_{22}^+A_{21}x_1+x_2))\geq (x_1,A/A_{22}x_1)
\end{gather*}
with equality if and only if $A_{22}^+A_{21}x_1+x_2\in \ker A_{22}$ if and only if $x_2\in \{-A_{22}^+A_{21}x_1\}+\ker{A_{22}}.$
\end{proof}

Using this we get immediately the next theorem on a minimization variational principle for $\operatorname{PPT}(\cdot)=J\operatorname{ppt}(\cdot)$, whose minimizers solve the following problem based on the next proposition.
\begin{problem}\label{prob:LinearEqsProbForA}
Let $A=[A_{ij}]_{i,j=1,2}\in M_n(\mathbb{F})$. Given $(x_1,y_2)\in \mathbb{F}^{n_1}\times\mathbb{F}^{n_2},$ find all $(y_1,x_2)\in \mathbb{F}^{n_1}\times\mathbb{F}^{n_2}$ such that
\begin{gather}
    A\begin{bmatrix}x_1\\x_2 \end{bmatrix}=\begin{bmatrix}y_1\\y_2 \end{bmatrix}.\label{eq:ProbLinearEqsProbForA}
\end{gather}
\end{problem}
\begin{proposition}[Solution of Problem \ref{prob:LinearEqsProbForA}]\label{prop:SolnOfLinearEqsProbForA}
If $A=[A_{ij}]_{i,j=1,2}\in M_n(\mathbb{F})$, $\operatorname{ran} A_{21}\subseteq \operatorname{ran} A_{22}$, and $\ker A_{22}\subseteq \ker A_{12}$ then for each $(x_1,y_2)\in \mathbb{F}^{n_1}\times\mathbb{F}^{n_2}$ and for $(y_1,x_2)\in \mathbb{F}^{n_1}\times\mathbb{F}^{n_2}$,
\begin{gather}
    A\begin{bmatrix}x_1\\x_2 \end{bmatrix}=\begin{bmatrix}y_1\\y_2 \end{bmatrix} \iff \left\{\begin{array}{l}
         x_2\in \{-A_{22}^+A_{21}x_1+A_{22}^+y_2\}+\ker{A_{22}},\\
        y_1=A/A_{22}x_1+A_{12}A_{22}^+y_2
    \end{array}\right.
\end{gather}
with a particular solution $(y_1,x_2^0)\in \mathbb{F}^{n_1}\times\mathbb{F}^{n_2}$ given by 
\begin{gather}
    J\operatorname{ppt}(A)\begin{bmatrix}x_1\\y_2 \end{bmatrix}=\begin{bmatrix}y_1\\-x_2^0 \end{bmatrix},\;\;x_2^0=-A_{22}^+A_{21}x_1+A_{22}^+y_2.
\end{gather}
\end{proposition}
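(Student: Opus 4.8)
The plan is to verify the claimed equivalence directly by block computation, using the two standing hypotheses $\operatorname{ran} A_{21}\subseteq \operatorname{ran} A_{22}$ and $\ker A_{22}\subseteq \ker A_{12}$ to control the Moore--Penrose pseudoinverse. The first hypothesis is exactly the condition that the linear equation $A_{22}z = (\text{anything in }\operatorname{ran}A_{22})$ involving the second block row is solvable once the first-block contribution has been moved to the right-hand side; concretely, $A_{22}A_{22}^+$ acts as the identity on $\operatorname{ran}A_{22}\supseteq \operatorname{ran}A_{21}$, so $A_{22}A_{22}^+A_{21} = A_{21}$, and dually the second hypothesis gives $A_{12}A_{22}^+A_{22} = A_{12}$ (since $A_{22}^+A_{22}$ is the orthogonal projection onto $\operatorname{ran}A_{22}^* = (\ker A_{22})^\perp$ and $A_{12}$ annihilates $\ker A_{22}$).

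For the forward implication, I would start from $A_{11}x_1 + A_{12}x_2 = y_1$ and $A_{21}x_1 + A_{22}x_2 = y_2$. From the second equation, $A_{22}x_2 = y_2 - A_{21}x_1$; note $y_2 - A_{21}x_1$ need not lie in $\operatorname{ran}A_{22}$ a priori, but solvability of the second equation for $x_2$ forces it to, and then applying $A_{22}^+$ gives $A_{22}^+A_{22}x_2 = A_{22}^+y_2 - A_{22}^+A_{21}x_1$, i.e. the component of $x_2$ in $(\ker A_{22})^\perp$ is pinned down, which is precisely the stated coset description $x_2 \in \{-A_{22}^+A_{21}x_1 + A_{22}^+y_2\} + \ker A_{22}$. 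Substituting $A_{22}x_2 = y_2 - A_{21}x_1$ back — or rather substituting the coset representative into the first equation and using $A_{12}(\ker A_{22}) = 0$ together with $A_{12}A_{22}^+A_{22} = A_{12}$ — yields $y_1 = A_{11}x_1 + A_{12}(-A_{22}^+A_{21}x_1 + A_{22}^+y_2) = (A_{11} - A_{12}A_{22}^+A_{21})x_1 + A_{12}A_{22}^+y_2 = A/A_{22}\,x_1 + A_{12}A_{22}^+y_2$. For the reverse implication, I would plug the two given formulas into the two block rows of $A\begin{bmatrix}x_1\\x_2\end{bmatrix}$ and check equality, again invoking $A_{22}A_{22}^+A_{21} = A_{21}$ for the second row and $A_{12}A_{22}^+A_{22} = A_{12}$, $A_{12}(\ker A_{22})=0$ for the first row; the $\ker A_{22}$ freedom in $x_2$ is harmless since $A_{22}$ kills it in row two and $A_{12}$ kills it in row one.

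Finally, the particular solution: taking $x_2^0 = -A_{22}^+A_{21}x_1 + A_{22}^+y_2$ (the coset representative with $x_2^0 \in \operatorname{ran}A_{22}^*$) is the canonical choice, and one just reads off from the block formula for $J\operatorname{ppt}(A)$ that its first block row applied to $\begin{bmatrix}x_1\\y_2\end{bmatrix}$ gives $(A/A_{22})x_1 + A_{12}A_{22}^+y_2 = y_1$ and its second block row gives $A_{22}^+A_{21}x_1 - A_{22}^+y_2 = -x_2^0$, matching the claim $J\operatorname{ppt}(A)\begin{bmatrix}x_1\\y_2\end{bmatrix} = \begin{bmatrix}y_1\\-x_2^0\end{bmatrix}$. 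I do not expect a genuine obstacle here; the only subtlety to handle carefully is that in the forward direction the two identities $A_{22}A_{22}^+A_{21}=A_{21}$ and $A_{12}A_{22}^+A_{22}=A_{12}$ must be derived cleanly from the range/kernel hypotheses and the defining properties of $A_{22}^+$ (in particular that $A_{22}A_{22}^+$ and $A_{22}^+A_{22}$ are the stated orthogonal projections, as recorded in Proposition \ref{prop:GPPTIsASchurCompl}), before they are used to collapse the substituted expressions.
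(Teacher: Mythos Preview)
Your proposal is correct and is essentially the paper's own argument carried out by hand: the paper invokes the generalized Aitken block-diagonalization (Lemma~\ref{lem:GenLem6_1In01CW}) to rewrite $A\begin{bmatrix}x_1\\x_2\end{bmatrix}=\begin{bmatrix}y_1\\y_2\end{bmatrix}$ in block-triangular form and then reads off the coset description, whereas you perform the same elimination directly on the two block rows using the identical projector identities $A_{22}A_{22}^+A_{21}=A_{21}$ and $A_{12}A_{22}^+A_{22}=A_{12}$. The content and the key steps coincide.
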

\begin{proof}
    Assume the hypotheses. Let $(x_1,y_2)\in \mathbb{F}^{n_1}\times\mathbb{F}^{n_2}$. Then by Lemma \ref{lem:GenLem6_1In01CW} and since $A_{22}A_{22}^+A_{21}=A_{21}$ (by the hypothesis $\operatorname{ran} A_{21}\subseteq \operatorname{ran} A_{22}$), it follows that for $(y_1,x_2)\in \mathbb{F}^{n_1}\times\mathbb{F}^{n_2}$,
\begin{gather*}
    A\begin{bmatrix}x_1\\x_2 \end{bmatrix}=\begin{bmatrix}y_1\\y_2 \end{bmatrix}\iff 
     \begin{bmatrix}
	            A/A_{22} & 0\\
	            0 & A_{22}
	    \end{bmatrix}\begin{bmatrix}x_1\\A_{22}^+A_{21}x_1+x_2\end{bmatrix}=\begin{bmatrix}y_1-A_{12}A_{22}^+y_2\\y_2 \end{bmatrix}\\
     \iff  \begin{bmatrix}A/A_{22}x_1\\A_{21}x_1+A_{22}x_2\end{bmatrix}=\begin{bmatrix}y_1-A_{12}A_{22}^+y_2\\y_2 \end{bmatrix}\\ \iff \begin{bmatrix}A/A_{22}x_1\\A_{22}^+A_{21}x_1+A_{22}^+A_{22}x_2\end{bmatrix}=\begin{bmatrix}y_1-A_{12}A_{22}^+y_2\\A_{22}^+y_2 \end{bmatrix}.
\end{gather*}
The proof of the proposition now follows immediately from this using the fact that $A_{22}^+A_{22}$ is the orthogonal projection of $\mathbb{F}^{n_2}$ onto $\operatorname{ran}(A_{22}^*)$ and as such its kernel is $\ker{A_{22}}$ so that in particular, 
\begin{gather*}
    J\operatorname{ppt}(A)\begin{bmatrix}x_1\\y_2 \end{bmatrix}=\begin{bmatrix}
    A/A_{22} & A_{12}A_{22}^{+}\\
    A_{22}^{+}A_{21} & -A_{22}^{+}
    \end{bmatrix}\begin{bmatrix}x_1\\y_2 \end{bmatrix}=\begin{bmatrix} A/A_{22}x_1+A_{12}A_{22}^{+}y_2\\ A_{22}^{+}A_{21}x_1-A_{22}^{+}y_2 \end{bmatrix}\\
    =\begin{bmatrix}y_1\\-A_{22}^+A_{22}x_2 \end{bmatrix}
    =\begin{bmatrix}y_1\\-A_{22}^+A_{22}(-A_{22}^+A_{21}x_1+A_{22}^+y_2) \end{bmatrix}\\
    =\begin{bmatrix}y_1\\-(-A_{22}^+A_{21}x_1+A_{22}^+y_2)\end{bmatrix}=\begin{bmatrix}y_1\\-x_2^0 \end{bmatrix}.
\end{gather*}
\end{proof}

\begin{theorem}\label{thm:PptMinVarPrinc}
	If $A^*=A=[A_{ij}]_{i,j=1,2}\in M_n(\mathbb{F})$, $0\leq A_{22}$, and $\ker A_{22}\subseteq \ker A_{12}$ then $J\operatorname{ppt}(A)$ is the unique self-adjoint matrix satisfying the minimization principle:
	\begin{gather}
		\frac{1}{2}\left( \begin{bmatrix}x_1\\y_2 \end{bmatrix},J \operatorname{ppt}(A)\begin{bmatrix}x_1\\y_2 \end{bmatrix}\right)\!=\!\min_{x_2\in \mathbb{F}^{n_2}}\!\left\{\frac{1}{2}\left( \begin{bmatrix}x_1\\x_2 \end{bmatrix}, A\begin{bmatrix}x_1\\x_2 \end{bmatrix} \right)\!-\!\operatorname{Re}\left( y_2,(A_{22}A_{22}^+)x_2\right)\right\},
	\end{gather}
	for all $(x_1,y_2)\in \mathbb{F}^{n_1}\times \mathbb{F}^{n_2}$. Furthermore, for each $(x_1,y_2)\in \mathbb{F}^{n_1}\times \mathbb{F}^{n_2}$, the set of minimizers is
	\begin{gather}
	    \{-A_{22}^+A_{21}x_1+A_{22}^+y_2\}+\ker{A_{22}}.
	\end{gather}
\end{theorem}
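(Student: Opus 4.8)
The plan is to derive Theorem~\ref{thm:PptMinVarPrinc} as a direct consequence of the Schur complement minimization principle (Lemma~\ref{lem:SchurCompMinPrinc}) applied to the augmented matrix $\hat A$ of Proposition~\ref{prop:GPPTIsASchurCompl}, together with the explicit description of minimizers. First I would observe that under the hypotheses $0\leq A_{22}$ and $\ker A_{22}\subseteq \ker A_{12}$ we automatically have $\operatorname{ran} A_{21}\subseteq \operatorname{ran} A_{22}$: indeed $A_{22}\geq 0$ is EP (so $\operatorname{ran} A_{22}=\operatorname{ran} A_{22}^*$ and $\ker A_{22}=\ker A_{22}^*$), and taking adjoints in $\ker A_{22}\subseteq \ker A_{12}=\ker A_{12}$ gives $\operatorname{ran} A_{21}=\operatorname{ran} A_{12}^*\subseteq \operatorname{ran} A_{22}^*=\operatorname{ran} A_{22}$. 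Hence both hypotheses of Proposition~\ref{prop:SolnOfLinearEqsProbForA} hold, and moreover $\hat A$ satisfies the hypotheses of Lemma~\ref{lem:SchurCompMinPrinc} with respect to its $(2,2)$-block $\hat A_{22}=A_{22}$: we have $0\leq \hat A_{22}$, and $\ker \hat A_{22}=\ker A_{22}\subseteq \ker A_{12}\cap \ker(A_{22}^+A_{22})=\ker \hat A_{12}$ since $\hat A_{12}=\begin{bmatrix} A_{12}\\ -A_{22}^+A_{22}\end{bmatrix}$ (using $\ker(A_{22}^+A_{22})=\ker A_{22}$).

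Next I would apply Lemma~\ref{lem:SchurCompMinPrinc} to $\hat A\in M_{n+n_2}(\mathbb{F})$, whose first block has dimension $n$ and second block dimension $n_2$. Writing the first-block variable as $z=\begin{bmatrix}x_1\\y_2\end{bmatrix}\in\mathbb{F}^{n}=\mathbb{F}^{n_1}\times\mathbb{F}^{n_2}$ and the second-block variable as $x_2\in\mathbb{F}^{n_2}$, and using $\hat A/\hat A_{22}=J\operatorname{ppt}(A)$ from Proposition~\ref{prop:GPPTIsASchurCompl}, the lemma gives
\[
\left(z, J\operatorname{ppt}(A)\, z\right)=\min_{x_2\in\mathbb{F}^{n_2}}\left(\begin{bmatrix}z\\x_2\end{bmatrix},\hat A\begin{bmatrix}z\\x_2\end{bmatrix}\right),
\]
with minimizer set $\{-\hat A_{22}^+\hat A_{21}z\}+\ker \hat A_{22}$. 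It remains to (i) expand the right-hand quadratic form $\big(\begin{bmatrix}z\\x_2\end{bmatrix},\hat A\begin{bmatrix}z\\x_2\end{bmatrix}\big)$ in terms of $A$, $x_1$, $y_2$, $x_2$, and (ii) identify the minimizer set. For (i), writing out $\hat A$ blockwise, the cross terms involving the off-diagonal entries $-A_{22}^+A_{22}$ and $-A_{22}A_{22}^+$ produce exactly $-2\operatorname{Re}(y_2,(A_{22}A_{22}^+)x_2)$ after using that $A_{22}A_{22}^+$ is a self-adjoint projection and $A_{22}^+A_{22}x_2$ pairs against $y_2$; the remaining terms assemble into $\big(\begin{bmatrix}x_1\\x_2\end{bmatrix},A\begin{bmatrix}x_1\\x_2\end{bmatrix}\big)$. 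Dividing by $2$ yields the stated functional. For (ii), compute $\hat A_{22}^+\hat A_{21} z = A_{22}^+\begin{bmatrix}A_{21}& -A_{22}A_{22}^+\end{bmatrix}\begin{bmatrix}x_1\\y_2\end{bmatrix}=A_{22}^+A_{21}x_1 - A_{22}^+A_{22}A_{22}^+ y_2 = A_{22}^+A_{21}x_1 - A_{22}^+y_2$, so $-\hat A_{22}^+\hat A_{21}z = -A_{22}^+A_{21}x_1+A_{22}^+y_2$, and $\ker\hat A_{22}=\ker A_{22}$, giving the claimed minimizer set $\{-A_{22}^+A_{21}x_1+A_{22}^+y_2\}+\ker A_{22}$, consistent with the particular solution $x_2^0$ from Proposition~\ref{prop:SolnOfLinearEqsProbForA}.

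Finally, for uniqueness: if $M^*=M$ also satisfies $\tfrac12(z,Mz)=\min_{x_2}\{\cdots\}=\tfrac12(z,J\operatorname{ppt}(A)z)$ for all $z=\begin{bmatrix}x_1\\y_2\end{bmatrix}$, then $(z,(M-J\operatorname{ppt}(A))z)=0$ for all $z\in\mathbb{F}^n$, and since $M-J\operatorname{ppt}(A)$ is self-adjoint (recall $[J\operatorname{ppt}(A)]^*=J\operatorname{ppt}(A)$) this forces $M=J\operatorname{ppt}(A)$ by polarization. The main obstacle I anticipate is purely bookkeeping in step~(i): carefully matching the block-matrix expansion of $\big(\begin{bmatrix}z\\x_2\end{bmatrix},\hat A\begin{bmatrix}z\\x_2\end{bmatrix}\big)$ against $\big(\begin{bmatrix}x_1\\x_2\end{bmatrix},A\begin{bmatrix}x_1\\x_2\end{bmatrix}\big)-2\operatorname{Re}(y_2,(A_{22}A_{22}^+)x_2)$, keeping track of which $A_{22}A_{22}^+$ versus $A_{22}^+A_{22}$ projections appear and using the EP property of $A_{22}$ to reconcile them; none of this is deep, but it is where an error could creep in.
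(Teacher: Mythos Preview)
Your proposal is correct and follows essentially the same route as the paper: reduce to Lemma~\ref{lem:SchurCompMinPrinc} applied to the augmented matrix $\hat A$ of Proposition~\ref{prop:GPPTIsASchurCompl}, then expand the quadratic form in $\hat A$ to recover the stated functional and read off the minimizer set from $-\hat A_{22}^+\hat A_{21}z$. Your verification that $\ker\hat A_{22}\subseteq\ker\hat A_{12}$ and your block-form computation of the quadratic form and minimizers match the paper's argument line for line; the extra remarks on the EP property, on $\operatorname{ran}A_{21}\subseteq\operatorname{ran}A_{22}$, and the explicit polarization argument for uniqueness are harmless elaborations not needed for the core proof.
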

\begin{proof}
Assume the hypotheses. Then by Proposition \ref{prop:GPPTIsASchurCompl} we have $[J\operatorname{ppt}(A)]^*=J\operatorname{ppt}(A)=\hat A/\hat A_{22}$ with $\hat A_{22}=A_{22}^+\geq 0$ and $\ker \hat A_{22}=\ker A_{22}=\ker A_{22}\cap \ker A_{12}=\ker \begin{bmatrix} A_{12} \\  -A_{22}^+A_{22}\end{bmatrix}=\ker \hat A_{12}$. Thus, it follows immediately by Lemma \ref{lem:SchurCompMinPrinc} that $J\operatorname{ppt}(A)$ is the unique self-adjoint matrix satisfying the minimization principle:
	\begin{gather*}
		\left(\begin{bmatrix}x_1\\y_2\end{bmatrix},J\operatorname{ppt}(A)\begin{bmatrix}x_1\\y_2\end{bmatrix}\right)=\min_{x_2\in \mathbb{F}^{n_2}}\left( \begin{bmatrix}x_1\\y_2\\x_2\end{bmatrix}, \hat A\begin{bmatrix}x_1\\y_2\\x_2 \end{bmatrix} \right),
	\end{gather*}
	for all $(x_1,y_2)\in \mathbb{F}^{n_1}\times \mathbb{F}^{n_2}$ with the set of minimizers
	\begin{gather*}
	    \left\{-\hat A_{22}^+\hat A_{21}\begin{bmatrix}x_1\\y_2\end{bmatrix}\right\}+\ker{\hat A_{22}}.
	\end{gather*}
The proof of the theorem now follows immediately from this since, for any $(x_1,y_2,x_2)\in \mathbb{F}^{n_1}\times \mathbb{F}^{n_2}\times \mathbb{F}^{n_2}$,
\begin{gather*}
    -\hat A_{22}^+\hat A_{21}\begin{bmatrix}x_1\\y_2\end{bmatrix}=-A_{22}^+\begin{bmatrix}
           A_{21} & -A_{22}A_{22}^+ 
    \end{bmatrix}\begin{bmatrix}x_1\\y_2\end{bmatrix}
    =-A_{22}^+A_{21}x_1+A_{22}^+y_2,\\
    \left( \begin{bmatrix}x_1\\y_2\\x_2\end{bmatrix}, \hat A\begin{bmatrix}x_1\\y_2\\x_2 \end{bmatrix} \right)=\left( \begin{bmatrix}x_1\\x_2\end{bmatrix}, A\begin{bmatrix}x_1\\x_2 \end{bmatrix} \right)-2\operatorname{Re}\left(y_2,A_{22}A_{22}^+x_2 \right) .
\end{gather*}
\end{proof}

This theorem, which appears to be new in this context, is insightful as it indicates possible applications of the theory of the principal pivot transform (and our main theorem, i.e., Theorem \ref{thm:IntroMainThmPaperMonoGPPT}) to linear systems of saddle point type \cite{05BG} such as those occurring in classical analytic mechanics when the Legendre transform is used to derive the Hamiltonian formulation from the Lagrangian formulation, and conversely (see, for instance, \cite[Sec.\ II.A and cf.\ Eqs.\ (1), (12), (13)]{14FW} for a brief summary of the relevant theory for Lagrangians that are quadratic forms and \cite{02HG,13VA} for the general theory). This is one of our motivations for this paper.

Another motivation for this paper in connection to Theorem \ref{thm:PptMinVarPrinc} is its potential application in the theory of composites, where the principal pivot transform and such variational principles appear in the Cherkaev-Gibiansky-Milton (CGM) method (see \cite{90GM, 94CG,02AC, 02GM, 09MS, 10MW, 12RD, 17RR, 16GM}) which is used, for instance, in obtaining upper and lower bounds on effective tensors of multiphased composites with lossy inclusions.

One immediate application of Theorem \ref{thm:PptMinVarPrinc} is another of our main results, which appears to be new, that $\operatorname{PPT}(\cdot)=J\operatorname{ppt}(\cdot)$ is a concave function on positive semidefinite matrices with the same kernel and, in particular, on the set of all positive definite matrices.
\begin{theorem}\label{thm:JPPTIsConcaveOnPosSemiDefMatricesWithSameKer}
    If $A^*=A=[A_{ij}]_{i,j=1,2},B^*=B=[B_{ij}]_{i,j=1,2}\in M_n(\mathbb{F})$, $0\leq A$, $0\leq B$, and $\ker A_{22}=\ker B_{22}$ then
    \begin{gather}
        (1-t)J\operatorname{ppt}A+tJ\operatorname{ppt}B\leq J\operatorname{ppt}[(1-t)A+tB],\;\forall t\in [0,1].
    \end{gather}
\end{theorem}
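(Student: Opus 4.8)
The plan is to derive the concavity of $J\operatorname{ppt}(\cdot)$ directly from the minimization variational principle of Theorem \ref{thm:PptMinVarPrinc}, exploiting the fact that a pointwise minimum of affine (hence concave) functions of the data is concave. First I would check that the hypotheses of Theorem \ref{thm:PptMinVarPrinc} are met for each of the three matrices $A$, $B$, and $C_t := (1-t)A + tB$ (for fixed $t \in [0,1]$): since $0 \leq A$ and $0 \leq B$ we have $0 \leq A_{22}$, $0 \leq B_{22}$, and $0 \leq (C_t)_{22} = (1-t)A_{22} + tB_{22}$; positive semi-definiteness of the full blocks also forces $\ker A_{22} \subseteq \ker A_{12}$ and $\ker B_{22} \subseteq \ker B_{12}$ (a standard consequence of $0\le A$ applied to vectors of the form $(x_1,x_2)$), and similarly $\ker (C_t)_{22} \subseteq \ker (C_t)_{12}$. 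Moreover the hypothesis $\ker A_{22} = \ker B_{22}$ gives $\ker (C_t)_{22} = \ker A_{22} = \ker B_{22}$ for every $t \in [0,1]$, because the kernel of a convex combination of positive semi-definite matrices with a common kernel equals that common kernel. This common-kernel fact also guarantees $(C_t)_{22}(C_t)_{22}^+ = A_{22}A_{22}^+ = B_{22}B_{22}^+ =: P$, the orthogonal projection onto the common range; call this projection $P$ and note it is independent of $t$.

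Next, fix $(x_1, y_2) \in \mathbb{F}^{n_1} \times \mathbb{F}^{n_2}$ and define, for a self-adjoint block matrix $M$ with $M_{22} \ge 0$, $\ker M_{22} \subseteq \ker M_{12}$, and $M_{22}M_{22}^+ = P$, the functional
\begin{gather*}
    f_M(x_2) = \frac{1}{2}\left( \begin{bmatrix}x_1\\x_2\end{bmatrix}, M\begin{bmatrix}x_1\\x_2\end{bmatrix}\right) - \operatorname{Re}\left( y_2, P x_2\right).
\end{gather*}
Theorem \ref{thm:PptMinVarPrinc} says $\tfrac{1}{2}\bigl(\,[x_1;y_2],\, J\operatorname{ppt}(M)\,[x_1;y_2]\,\bigr) = \min_{x_2} f_M(x_2)$. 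The crucial observation is that for each fixed $x_2$, the map $M \mapsto f_M(x_2)$ is \emph{affine} in $M$ — the quadratic-form term $( [x_1;x_2], M [x_1;x_2])$ is linear in the entries of $M$, and the subtracted term does not involve $M$ at all (here it is essential that $P$ is the same for $A$, $B$, and $C_t$, so the subtracted term is literally identical in $f_A$, $f_B$, $f_{C_t}$). Therefore $f_{C_t}(x_2) = (1-t) f_A(x_2) + t\, f_B(x_2)$ for every $x_2$. Taking the infimum over $x_2$ and using the elementary inequality $\inf_{x_2}\bigl[(1-t)f_A(x_2) + t f_B(x_2)\bigr] \ge (1-t)\inf_{x_2} f_A(x_2) + t \inf_{x_2} f_B(x_2)$ (superadditivity of infimum under nonnegative combinations) yields
\begin{gather*}
    \frac{1}{2}\left( \begin{bmatrix}x_1\\y_2\end{bmatrix}, J\operatorname{ppt}(C_t)\begin{bmatrix}x_1\\y_2\end{bmatrix}\right) \geq (1-t)\,\frac{1}{2}\left( \begin{bmatrix}x_1\\y_2\end{bmatrix}, J\operatorname{ppt}(A)\begin{bmatrix}x_1\\y_2\end{bmatrix}\right) + t\,\frac{1}{2}\left( \begin{bmatrix}x_1\\y_2\end{bmatrix}, J\operatorname{ppt}(B)\begin{bmatrix}x_1\\y_2\end{bmatrix}\right).
\end{gather*}
Since this holds for every $(x_1,y_2)$ and all three matrices $J\operatorname{ppt}(A)$, $J\operatorname{ppt}(B)$, $J\operatorname{ppt}(C_t)$ are self-adjoint, the quadratic-form inequality is exactly the Löwner-order inequality $(1-t)J\operatorname{ppt}(A) + t\,J\operatorname{ppt}(B) \le J\operatorname{ppt}(C_t)$, which is the claim.

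The main obstacle — and the only genuinely delicate point — is verifying that the subtracted affine term $\operatorname{Re}(y_2, (M_{22}M_{22}^+)x_2)$ is the \emph{same function of $x_2$} for $M = A$, $M = B$, and $M = C_t$; this is precisely where the hypothesis $\ker A_{22} = \ker B_{22}$ is used, since without it the projections $A_{22}A_{22}^+$, $B_{22}B_{22}^+$, and $(C_t)_{22}(C_t)_{22}^+$ could differ and the three functionals $f_A, f_B, f_{C_t}$ would no longer be related by the clean affine identity. I would therefore isolate as a preliminary lemma (or cite the relevant standard fact) the statement that if $0 \le X$, $0 \le Y$ in $M_m(\mathbb{F})$ with $\ker X = \ker Y$, then for all $t \in [0,1]$, $\ker\bigl((1-t)X + tY\bigr) = \ker X$ and consequently $(1-t)X+tY$ has the same range and the same orthogonal range-projection as $X$; the proof is immediate from the fact that for $0 \le X, 0\le Y$ and $v$ in the space, $((1-t)X + tY)v = 0$ forces $(v,Xv) = (v,Yv) = 0$, hence $Xv = Yv = 0$. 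Everything else is the routine bookkeeping of translating "quadratic forms agree for all vectors" into the Löwner order and checking the (standard) kernel-inclusion side conditions needed to invoke Theorem \ref{thm:PptMinVarPrinc}.
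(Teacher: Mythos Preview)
Your proposal is correct and follows essentially the same route as the paper's own proof: both verify that the hypotheses of Theorem~\ref{thm:PptMinVarPrinc} hold for $A$, $B$, and $(1-t)A+tB$, use $\ker A_{22}=\ker B_{22}$ to conclude that the range projections $A_{22}A_{22}^+=B_{22}B_{22}^+=(C_t)_{22}(C_t)_{22}^+$ coincide, and then exploit the affinity of the functional in $M$ together with the superadditivity of the infimum to obtain the quadratic-form inequality for every $(x_1,y_2)$. The paper writes the computation out inline rather than packaging it via your $f_M$ notation, but the argument is the same.
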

\begin{proof}
    Assume the hypotheses. Then $A_{22}^*=A_{22}\geq 0, B_{22}^*=B_{22}\geq 0,$ and $\ker A_{22}=\ker B_{22}$ implies
    \begin{gather*}
        \ker[(1-t)A_{22}+tB_{22}]=\ker A_{22}=\ker B_{22},\;\forall t\in[0,1]
    \end{gather*}
    from which it follows that
    \begin{gather*}
        [(1-t)A_{22}+tB_{22}][(1-t)A_{22}+tB_{22}]^+=A_{22}A_{22}^+=B_{22}B_{22}^+,\;\forall t\in[0,1]
    \end{gather*}
    and hence
    \begin{gather*}
        A_{22}A_{22}^+=B_{22}B_{22}^+=(1-t)A_{22}A_{22}^++tB_{22}B_{22}^+,\;\forall t\in[0,1].
    \end{gather*}
    Next, the hypotheses of Theorem \ref{thm:PptMinVarPrinc} are true of $(1-t)A+tB$ for each $t\in [0,1]$ implying for all $t\in [0,1]$ and all $(x_1,y_2)\in \mathbb{F}^{n_1}\times \mathbb{F}^{n_2}$,
    \begin{gather*}
		\frac{1}{2}\left( \begin{bmatrix}x_1\\y_2 \end{bmatrix},J \operatorname{ppt}[(1-t)A+tB]\begin{bmatrix}x_1\\y_2 \end{bmatrix}\right)\\
  =\min_{x_2\in \mathbb{F}^{n_2}}\bigg\{\frac{1}{2}\left( \begin{bmatrix}x_1\\x_2 \end{bmatrix}, [(1-t)A+tB]\begin{bmatrix}x_1\\x_2 \end{bmatrix} \right)\bigg.\\
  \bigg.-\operatorname{Re}\left( y_2,[(1-t)A_{22}+tB_{22}][(1-t)A_{22}+tB_{22}]^+x_2\right)\bigg\}\\
  =\min_{x_2\in \mathbb{F}^{n_2}}\bigg\{\frac{1}{2}\left( \begin{bmatrix}x_1\\x_2 \end{bmatrix}, [(1-t)A+tB]\begin{bmatrix}x_1\\x_2 \end{bmatrix} \right)\bigg.\\
  \bigg.-\operatorname{Re}\left( y_2,[(1-t)A_{22}A_{22}^++tB_{22}B_{22}^+]x_2\right)\bigg\}\\
  \geq (1-t)\min_{x_2\in \mathbb{F}^{n_2}}\left\{\frac{1}{2}\left( \begin{bmatrix}x_1\\x_2 \end{bmatrix}, A\begin{bmatrix}x_1\\x_2 \end{bmatrix} \right)-\operatorname{Re}\left( y_2,(A_{22}A_{22}^+)x_2\right)\right\}\\
  + t\min_{x_2\in \mathbb{F}^{n_2}}\left\{\frac{1}{2}\left( \begin{bmatrix}x_1\\x_2 \end{bmatrix}, B\begin{bmatrix}x_1\\x_2 \end{bmatrix} \right)-\operatorname{Re}\left( y_2,(B_{22}B_{22}^+)x_2\right)\right\}\\
  =\frac{1}{2}\left( \begin{bmatrix}x_1\\y_2 \end{bmatrix},[(1-t)J \operatorname{ppt}A+tJ \operatorname{ppt}B]\begin{bmatrix}x_1\\y_2 \end{bmatrix}\right).
	\end{gather*}
 The proof follows now immediately from this.
\end{proof}

As an application of this theorem, we can immediate prove the following corollaries on the well-known results on the concavity of the Schur complement (see, for instance, \cite{00LM}) and of the convexity of the Moore-Penrose pseudoinverse on positive semidefinite matrices with the same kernel \cite{81DK, 85GW, 89DR, 96MP, 11KN, 18KN} (and, in particular, on the set of all positive definite matrices \cite{73MM, 79TA, 19BS}).

\begin{corollary}
    If $A^*=A=[A_{ij}]_{i,j=1,2},B^*=B=[B_{ij}]_{i,j=1,2}\in M_n(\mathbb{F})$, $0\leq A$, $0\leq B$, and $\ker A_{22}=\ker B_{22}$ then
    \begin{gather}
        (1-t) A/A_{22}+t B/B_{22}\leq [(1-t)A+tB]/[(1-t)A+tB]_{22},\;\forall t\in [0,1].
    \end{gather}
\end{corollary}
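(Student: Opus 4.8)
The plan is to read off the desired inequality from Theorem~\ref{thm:JPPTIsConcaveOnPosSemiDefMatricesWithSameKer} by passing to the $(1,1)$-block. The key observation is that for any $M=[M_{ij}]_{i,j=1,2}\in M_n(\mathbb{F})$ the $(1,1)$-block of $J\operatorname{ppt}(M)$ is precisely the Schur complement $M/M_{22}$; equivalently, writing $E=\begin{bmatrix} I_{n_1}\\ 0\end{bmatrix}\in M_{n\times n_1}(\mathbb{F})$, we have $E^*[J\operatorname{ppt}(M)]E=M/M_{22}$ for every such $M$.

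First I would note that the hypotheses $0\le A$, $0\le B$, and $\ker A_{22}=\ker B_{22}$ are exactly those of Theorem~\ref{thm:JPPTIsConcaveOnPosSemiDefMatricesWithSameKer}, so that theorem yields, for every $t\in[0,1]$,
\[
(1-t)J\operatorname{ppt}A+tJ\operatorname{ppt}B\le J\operatorname{ppt}[(1-t)A+tB].
\]
Next, compressing both sides by $E$ — which preserves the L\"owner order, since $X\le Y$ implies $E^*XE\le E^*YE$ for any $E$ — and using the block identity above together with $[(1-t)A+tB]_{22}=(1-t)A_{22}+tB_{22}$, I obtain
\[
(1-t)\,A/A_{22}+t\,B/B_{22}\le [(1-t)A+tB]/[(1-t)A+tB]_{22},
\]
which is exactly the assertion of the corollary.

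There is essentially no obstacle here: once Theorem~\ref{thm:JPPTIsConcaveOnPosSemiDefMatricesWithSameKer} is available the corollary is immediate, the only point worth recording carefully being that extracting a principal block (more generally, the map $X\mapsto E^*XE$) is order-preserving on self-adjoint matrices. As an alternative, one could give a self-contained argument mirroring the proof of Theorem~\ref{thm:JPPTIsConcaveOnPosSemiDefMatricesWithSameKer}: since $0\le A$ and $0\le B$ force $A_{22},B_{22}\ge 0$ with $\ker A_{22}\subseteq\ker A_{12}$ and $\ker B_{22}\subseteq\ker B_{12}$, and since $\ker A_{22}=\ker B_{22}$ gives $\ker[(1-t)A_{22}+tB_{22}]=\ker A_{22}$, one may apply the Schur-complement minimization principle of Lemma~\ref{lem:SchurCompMinPrinc} to $A$, $B$, and $(1-t)A+tB$ and use superadditivity of $\min$; but deducing the result from the $\operatorname{PPT}$ concavity theorem is the cleaner route.
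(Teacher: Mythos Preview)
Your proposal is correct and matches the paper's proof essentially line for line: the paper applies Theorem~\ref{thm:JPPTIsConcaveOnPosSemiDefMatricesWithSameKer} and reads off the $(1,1)$-block, using that $[J\operatorname{ppt}(M)]_{11}=M/M_{22}$ and that taking a principal submatrix preserves the L{\"o}wner order. Your aside about an alternative direct argument via Lemma~\ref{lem:SchurCompMinPrinc} is also sound but, as you note, unnecessary once the concavity theorem is in hand.
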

\begin{proof}
    Assume the hypotheses. Then it follows from Theorem \ref{thm:JPPTIsConcaveOnPosSemiDefMatricesWithSameKer} that
    \begin{gather*}
        (1-t) A/A_{22}+t B/B_{22}=[(1-t)J\operatorname{ppt}A+tJ\operatorname{ppt}B]_{11}\\
        \leq \{J\operatorname{ppt}[(1-t)A+tB]\}_{11}\\
        =[(1-t)A+tB]/[(1-t)A+tB]_{22},\;\forall t\in [0,1].
    \end{gather*}
\end{proof}
\begin{corollary}
    If $C^*=C,D^*=D\in M_m(\mathbb{F})$, $0\leq C$, $0\leq D$, and $\ker C=\ker D$ then
    \begin{gather}
         [(1-t)C+tD]^+\leq (1-t)C^++t D^+,\;\forall t\in [0,1].
    \end{gather}
\end{corollary}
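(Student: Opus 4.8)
The plan is to deduce this from Theorem~\ref{thm:JPPTIsConcaveOnPosSemiDefMatricesWithSameKer} by realizing the pseudoinverse as the negative of the $(2,2)$-block of $J\operatorname{ppt}$ of a block-diagonal matrix, in exact parallel with how the previous corollary extracted the $(1,1)$-block. Partition $M_{2m}(\mathbb{F})$ conformally with $n_1=n_2=m$ and set
\[
A=\begin{bmatrix} 0 & 0\\ 0 & C\end{bmatrix},\qquad B=\begin{bmatrix} 0 & 0\\ 0 & D\end{bmatrix}.
\]
Then $A^{*}=A=[A_{ij}]_{i,j=1,2}$ and $B^{*}=B=[B_{ij}]_{i,j=1,2}$ satisfy $0\leq A$ and $0\leq B$ (because $0\leq C$ and $0\leq D$), with $A_{22}=C$ and $B_{22}=D$, so the hypothesis $\ker A_{22}=\ker C=\ker D=\ker B_{22}$ of Theorem~\ref{thm:JPPTIsConcaveOnPosSemiDefMatricesWithSameKer} holds. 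A one-line computation from the definition of $\operatorname{ppt}$ gives $A/A_{22}=0$, $A_{12}A_{22}^{+}=0$, and $A_{22}^{+}A_{21}=0$, hence
\[
J\operatorname{ppt}(A)=\begin{bmatrix} 0 & 0\\ 0 & -C^{+}\end{bmatrix},\qquad J\operatorname{ppt}(B)=\begin{bmatrix} 0 & 0\\ 0 & -D^{+}\end{bmatrix},
\]
and likewise $J\operatorname{ppt}((1-t)A+tB)$ is block-diagonal with $(2,2)$-block $-[(1-t)C+tD]^{+}$, since $[(1-t)A+tB]_{22}=(1-t)C+tD$.

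Next I would apply Theorem~\ref{thm:JPPTIsConcaveOnPosSemiDefMatricesWithSameKer} to this pair $A,B$, obtaining
\[
(1-t)J\operatorname{ppt}(A)+t\,J\operatorname{ppt}(B)\ \leq\ J\operatorname{ppt}((1-t)A+tB),\qquad \forall\, t\in[0,1].
\]
Compressing both sides by the isometry $E=\begin{bmatrix}0\\ I_m\end{bmatrix}\in M_{2m\times m}(\mathbb{F})$ --- that is, passing to the $(2,2)$-block, an operation that preserves the L{\"o}wner order because $X\leq Y$ implies $E^{*}XE\leq E^{*}YE$ --- gives
\[
-(1-t)C^{+}-tD^{+}\ \leq\ -[(1-t)C+tD]^{+},\qquad \forall\, t\in[0,1],
\]
and multiplying through by $-1$ reverses the order to yield exactly $[(1-t)C+tD]^{+}\leq (1-t)C^{+}+tD^{+}$.

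There is essentially no obstacle here, as the proof is a direct specialization. The only points worth a word of care are: (i) the kernel hypothesis of Theorem~\ref{thm:JPPTIsConcaveOnPosSemiDefMatricesWithSameKer} is met precisely because $\ker C=\ker D$ is assumed (and the internal argument of that theorem already handles the fact that, for $0\leq C$ and $0\leq D$, the convex combination $(1-t)C+tD$ shares this common kernel for all $t\in[0,1]$, so that $[(1-t)C+tD]^{+}$ behaves correctly); and (ii) the passage to a principal block is a congruence $X\mapsto E^{*}XE$ and hence monotone for $\leq$. Keeping $n_1=m$ rather than $n_1=0$ merely sidesteps any degenerate-partition bookkeeping, at no cost.
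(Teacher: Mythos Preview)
Your proof is correct and follows essentially the same approach as the paper: embed $C,D$ as the $(2,2)$-blocks of block-diagonal positive semidefinite matrices with vanishing off-diagonal and $(1,1)$ blocks, apply Theorem~\ref{thm:JPPTIsConcaveOnPosSemiDefMatricesWithSameKer}, and read off the $(2,2)$-block. The only (immaterial) difference is that the paper takes $n_1=1$ (so $n=m+1$) whereas you take $n_1=m$ (so $n=2m$).
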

\begin{proof}
    Assume the hypotheses. Then it follows from Theorem \ref{thm:JPPTIsConcaveOnPosSemiDefMatricesWithSameKer} with $A^*=A=[A_{ij}]_{i,j=1,2},B^*=B=[B_{ij}]_{i,j=1,2}\in M_{n}(\mathbb{F}), n=m+1$ defined by $A_{22}=C, B_{22}=D$ and $A_{ij}=B_{ij}=0$ otherwise, (so that $0\leq A$, $0\leq B$, and $\ker A_{22}=\ker B_{22}$) that we have
    \begin{gather*}
        (1-t) C^++t D^+=-[(1-t)J\operatorname{ppt}A+tJ\operatorname{ppt}B]_{22}\\
        \geq -\{J\operatorname{ppt}[(1-t)A+tB]\}_{22}=[(1-t)C+tD]^+,\;\forall t\in [0,1].
    \end{gather*}
\end{proof}

\begin{remark}
    It should be pointed out that one could also use Theorem \ref{thm:JPPTIsConcaveOnPosSemiDefMatricesWithSameKer} to give a simple proof on the matrix monotoncity of $\operatorname{PPT}(\cdot)=J\operatorname{ppt}(\cdot)$ on positive semi-definite matrices with the same kernel, but such a result is already contained in our more general result, Theorem \ref{thm:IntroMainThmPaperMonoGPPT} (cf.\ Lemma \ref{lem:MonotMoorePenosePseudoInv}).
\end{remark}

Our results may have further applications, but, of course, this requires recognizing the occurrence of the principal pivot transform in specific problems, which may not always be clear. As such, we want to briefly mention below some additional contexts where the principal pivot transform arises under various guises. 

First, as discussed in \cite{00MT, 22PT}, the principal pivot(al) transform was introduced around 1960 by A.\ W.\ Tucker \cite{60AT, 63AT} in the context of mathematical programming to understand the linear algebraic structure underlying the simplex method of G.\ B.\ Dantzig, but it is also known as the sweep operator \cite{79JG, 10KL}, exchange operator \cite{98SS}, partial inverse \cite{06NW}, and gyration operator \cite{65DH, 66DH, 72DT, 75MT} with the latter a term coined by R.\ J.\ Duffin, D.\ Hazony, and N.\ Morrison in the context of network synthesis problems involving nonreciprocal circuit elements called gyrators \cite{48BT, 63DH} (see also Sec.\ 4.4 in \cite{21AWa}). 

Second, the principal pivot transform is essentially the Potapov-Ginzburg transform \cite[see Sec.\ 2.2 and cf.\ p.\ 21, Eq.\ (2.13)]{08AD}. This transform was introduced and studied by V.\ P.\ Potapov \cite{55VP} in 1955 and Y.\ P.\ Ginzburg \cite{57YG} in 1957, in the context of the theory $J$-contractive (nonexpanding) matrix- and operator-valued functions. It has also been called the Redheffer transform \cite[see Sec.\ 17.4 and cf.\ p.\ 328, Eqs.\ (17.17), (17.18)]{10BG} after R.\ Redheffer who introduced the transformation in 1959 in the context of matrix Riccati equations and related it to the Redheffer star product \cite{59RR, 60RR,72WR}, all which play an important role in transmission-line theory \cite[Sec.\ 2.16]{08AD}, \cite{62RR, 13RR}, the electrodynamics of layered media \cite{96LL}, and Schrodinger equations on graphs \cite{01KS} whenever the scattering matrix plays a prominent role. In these context, the Redheffer transform converts a chain-scattering or transfer matrix into the scattering matrix and as such it is also sometimes called the chain scattering transform \cite{22AS} (see also \cite{08AD, 95KO, 97HK, 02TP}).

Finally, the principal pivot transform also occurs in other contexts, for instance, in connection to solving inverse problems for canonical differential
equations \cite{94KO, 12AD}, in the study of the analytic properties of the electromagnetic Dirichlet-to-Neumann (DtN)
map in layered media \cite{16CW}, and in the solvability of elliptic PDEs in terms of their boundary data \cite{10AA}.

\section{Proof of the monotonicity of principal pivot transform}\label{sec:PrfMonoPPT}

Our goal in this section is to prove Theorem \ref{thm:IntroMainThmPaperMonoGPPT} on the monotoncity of the map $\operatorname{PPT}(\cdot)=J\operatorname{ppt}(\cdot)$ in the sense of the L{\"o}wner ordering $\leq $.

We begin by considering a matrix $B\in M_n(\mathbb{F})$ partitioned conformally to the matrix $A=[A_{ij}]_{i,j=1,2}\in M_n(\mathbb{F})$ in (\ref{notat:AMatrixPartitioned}), i.e.,
\begin{gather}
    B=[B_{ij}]_{i,j=1,2}=\begin{bmatrix}
    B_{11} & B_{12}\\
    B_{21} & B_{22}
    \end{bmatrix},\label{notat:BMatrixPartitioned}
\end{gather}
and $B_{ij}\in M_{n_i\times n_j}(\mathbb{F})$ for each $i,j=1,2$.  Then
\begin{gather}
        J\operatorname{ppt}(B)-J\operatorname{ppt}(A) =\begin{bmatrix}
    B/B_{22}-A/A_{22} & B_{12}B_{22}^{+}-A_{12}A_{22}^{+}\\
    B_{22}^{+}B_{21}- A_{22}^{+}A_{21}& A_{22}^{+}-B_{22}^{+}
    \end{bmatrix},\label{formula:DiffPPTOfAAndBMatrixPartitioned}\\
    [J\operatorname{ppt}(B)-J\operatorname{ppt}(A)]/[J\operatorname{ppt}(B)-J\operatorname{ppt}(A)]_{22}\notag\\
    = B/B_{22}-A/A_{22}-(B_{12}B_{22}^+-A_{12}A_{22}^+)(A_{22}^+-B_{22}^+)^+(B_{22}^+B_{21}-A_{22}^+A_{21}).\label{formula:SchurComplDiffPPTOfAAndBMatrixPartitioned}
\end{gather}

Now, recall the well-known lemma \{originally due to A.\ Albert \cite{69AA}, see Eqs.\ (2.6) and (2.7) in \cite{74CH} as well as Theorem 1.20 and Sec.\ 6.0.4 in \cite{05FZ} on the ``Albert nonnegative definiteness conditions"\} which characterizes self-adjoint matrices that are positive semi-definite in terms of their Schur complement.
\begin{lemma}\label{lem:NecSuffCondPosMatrixUsingSchurCompl}
If $A^*=A=[A_{ij}]_{i,j=1,2}\in M_n(\mathbb{F})$ then $0\leq A$ if and only if
\begin{gather}
    0\leq A_{22},\; \ker A_{22}\subseteq \ker A_{12},\text{ and } 0\leq A/A_{22}.
\end{gather}
\end{lemma}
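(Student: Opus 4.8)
The plan is to prove both directions via a congruence transformation that block-diagonalizes $A$ up to the obstruction coming from $\operatorname{ran} A_{21}$ not lying in $\operatorname{ran} A_{22}$. First I would prove the "only if" direction, which is the easy half: assume $0 \leq A$. Then for any vector of the form $(0, x_2)$ we get $(x_2, A_{22} x_2) = ((0,x_2), A(0,x_2)) \geq 0$, so $0 \leq A_{22}$ immediately. For the kernel containment $\ker A_{22} \subseteq \ker A_{12}$: if $A_{22} x_2 = 0$, use that a positive semi-definite matrix annihilates a vector $v$ iff $Av = 0$ (a standard fact, provable by Cauchy–Schwarz applied to the semi-inner product induced by $A$); applying this to $v = (0,x_2)$ gives $A(0,x_2) = 0$, so in particular $A_{12} x_2 = 0$. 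Finally, $0 \leq A/A_{22}$ follows from Lemma~\ref{lem:SchurCompMinPrinc} (whose hypotheses $0 \leq A_{22}$ and $\ker A_{22} \subseteq \ker A_{12}$ we have just verified): for each $x_1$, $(x_1, (A/A_{22})x_1) = \min_{x_2}((x_1,x_2), A(x_1,x_2)) \geq 0$ since $A \geq 0$.

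For the "if" direction, assume $0 \leq A_{22}$, $\ker A_{22} \subseteq \ker A_{12}$, and $0 \leq A/A_{22}$. The kernel containment gives $A_{12}(I - A_{22}^+ A_{22}) = 0$, i.e.\ $A_{12} A_{22}^+ A_{22} = A_{12}$, and taking adjoints $A_{22} A_{22}^+ A_{21} = A_{21}$, i.e.\ $\operatorname{ran} A_{21} \subseteq \operatorname{ran} A_{22}$. Now perform the congruence with the block upper-triangular matrix
\begin{gather*}
    S = \begin{bmatrix} I_{n_1} & 0 \\ A_{22}^+ A_{21} & I_{n_2} \end{bmatrix},
\end{gather*}
which is invertible. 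Exactly the computation already carried out inside the proof of Lemma~\ref{lem:SchurCompMinPrinc} (via Lemma~\ref{lem:GenLem6_1In01CW}) shows that
\begin{gather*}
    S^* \begin{bmatrix} A/A_{22} & 0 \\ 0 & A_{22} \end{bmatrix} S = A,
\end{gather*}
where the identity $A_{22} A_{22}^+ A_{21} = A_{21}$ is what makes the off-diagonal blocks vanish. Since $A/A_{22} \geq 0$ and $A_{22} \geq 0$, the block-diagonal middle factor is positive semi-definite, and positive semi-definiteness is preserved under congruence $X \mapsto S^* X S$; hence $0 \leq A$.

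The main obstacle — really the only subtle point — is the role of the hypothesis $\ker A_{22} \subseteq \ker A_{12}$ in the "if" direction: without it, the congruence identity above fails (one instead picks up a leftover term $A_{21}^*[(I - A_{22}^+A_{22})A_{22}^+]^* A_{21}$, exactly as seen in the proof of Lemma~\ref{lem:ImPartSchurComplementFormula}), and indeed the conclusion $0 \leq A$ can be false. So the crux is to observe that $\ker A_{22} \subseteq \ker A_{12}$ is equivalent to $\operatorname{ran} A_{21} \subseteq \operatorname{ran} A_{22}$ (by taking adjoints and using $\operatorname{ran} A_{22}^* = \operatorname{ran} A_{22}$ for self-adjoint $A_{22}$, or the general identity $\ker M \subseteq \ker N \iff \operatorname{ran} N^* \subseteq \operatorname{ran} M^*$), which is precisely the condition $A_{22} A_{22}^+ A_{21} = A_{21}$ needed to kill the cross terms. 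Everything else is routine block multiplication and the elementary fact that congruence preserves the Löwner order.
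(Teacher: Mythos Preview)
Your proof is correct in both directions. Note, however, that the paper does not actually prove this lemma: it is stated as the well-known ``Albert nonnegative definiteness conditions'' with citations to \cite{69AA}, \cite{74CH}, and \cite{05FZ}, and then used without proof. So there is no paper proof to compare against; your argument supplies exactly the kind of standard proof those references contain, and it meshes cleanly with the surrounding material (Lemma~\ref{lem:SchurCompMinPrinc} and Lemma~\ref{lem:GenLem6_1In01CW}) without circularity.

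One small inaccuracy in your discussion of the obstacle: the ``leftover term'' you quote, $A_{21}^*[(I - A_{22}^+A_{22})A_{22}^+]^* A_{21}$, is in fact identically zero for any $A_{22}$, since $A_{22}^+A_{22}A_{22}^+ = A_{22}^+$. What actually goes wrong when $\ker A_{22} \not\subseteq \ker A_{12}$ is that the off-diagonal blocks of $S^* \operatorname{diag}(A/A_{22},A_{22}) S$ come out as $A_{12}A_{22}^+A_{22}$ rather than $A_{12}$, so the congruence no longer reproduces $A$. This does not affect the validity of your proof---you correctly identify that the kernel condition is equivalent to $A_{22}A_{22}^+A_{21}=A_{21}$ and that this is precisely what makes the factorization exact---but the parenthetical explanation of the failure mode is slightly off.
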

\begin{remark}
    It should be noted that in some statements of our results, kernel inclusions can be replaced by equivalent range inclusions. This is a consequence of the facts that if $A=[A_{ij}]_{i,j=1,2}\in M_n(\mathbb{F})$ then $\ker A_{22}\subseteq \ker A_{12}$ iff $\operatorname{ran} (A^*)_{21}\subseteq \operatorname{ran} (A^*)_{22}$; (ii) $\operatorname{ran} A_{21}\subseteq \operatorname{ran} A_{22}$ iff $\ker (A^*)_{22}\subseteq \ker (A^*)_{12}$. This follows immediately using the facts that if $i,j\in\{1,2\}$ then $(A_{ij})^*=(A^*)_{ji}$, for any $B\in M_{n_i\times n_j},(\mathbb{F})$ we have $(\ker B)^{\perp}=\operatorname{ran}(B^*)$ and $(\operatorname{ran} B)^{\perp}=\ker(B^*)$, and if $S_1,S_2$ are subspaces of $\mathbb{F}^j$ with $S_1\subseteq S_2$ then $S_2^{\perp}\subseteq S_1^{\perp}$.
\end{remark}

From this lemma we get immediately the following result which  is a key observation that helps give insight into statements $(a)$ and $(b)$ in Theorem \ref{thm:IntroMainThmPaperMonoGPPT} and the proof they are equivalent.
\begin{corollary}\label{cor:NecSuffCondMatricesForMonotJPPT}
If $A^*=A=[A_{ij}]_{i,j=1,2}, B^*=B=[B_{ij}]_{i,j=1,2}\in M_n(\mathbb{F})$ then
\begin{gather}
    J\operatorname{ppt}(A)\leq J\operatorname{ppt}(B)
\end{gather}
if and only if
\begin{gather}
    B_{22}^+\leq A_{22}^+,\;\ker (A_{22}^+-B_{22}^+)\subseteq \ker(B_{12}B_{22}^+-A_{12}A_{22}^+),\label{cor:NecSuffCondMatricesForMonotJPPT:CondI}\\
    0\leq B/B_{22}-A/A_{22}-(B_{12}B_{22}^+-A_{12}A_{22}^+)(A_{22}^+-B_{22}^+)^+(B_{22}^+B_{21}-A_{22}^+A_{21}).\label{cor:NecSuffCondMatricesForMonotJPPT:CondII}
\end{gather}
\end{corollary}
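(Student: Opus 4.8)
The plan is to apply the Albert-type criterion, Lemma \ref{lem:NecSuffCondPosMatrixUsingSchurCompl}, directly to the single matrix
\begin{gather*}
    M := J\operatorname{ppt}(B)-J\operatorname{ppt}(A).
\end{gather*}
Two preliminary observations make this legitimate. First, $M$ is self-adjoint: since $A^*=A$ and $B^*=B$, the remarks preceding Theorem \ref{thm:IntroMainThmPaperMonoGPPT} give $[J\operatorname{ppt}(A)]^*=J\operatorname{ppt}(A)$ and $[J\operatorname{ppt}(B)]^*=J\operatorname{ppt}(B)$, so $M^*=M$. Second, by the definition of the L\"owner order, $J\operatorname{ppt}(A)\leq J\operatorname{ppt}(B)$ is literally the statement $0\leq M$.

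Next I would read off the $2\times 2$ block structure of $M=[M_{ij}]_{i,j=1,2}$ from the already-derived formula (\ref{formula:DiffPPTOfAAndBMatrixPartitioned}):
\begin{gather*}
    M_{11}=B/B_{22}-A/A_{22},\quad M_{12}=B_{12}B_{22}^{+}-A_{12}A_{22}^{+},\quad
    M_{21}=B_{22}^{+}B_{21}-A_{22}^{+}A_{21},\quad M_{22}=A_{22}^{+}-B_{22}^{+}.
\end{gather*}
Here $M_{22}$ is self-adjoint because $A_{22}^{+}$ and $B_{22}^{+}$ are (pseudoinverses of self-adjoint matrices are self-adjoint). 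Applying Lemma \ref{lem:NecSuffCondPosMatrixUsingSchurCompl} to $M$, we get that $0\leq M$ holds if and only if
\begin{gather*}
    0\leq M_{22},\qquad \ker M_{22}\subseteq\ker M_{12},\qquad 0\leq M/M_{22}.
\end{gather*}
The first of these is $0\leq A_{22}^{+}-B_{22}^{+}$, i.e.\ $B_{22}^{+}\leq A_{22}^{+}$; the second is precisely the kernel inclusion appearing in (\ref{cor:NecSuffCondMatricesForMonotJPPT:CondI}); and the third, upon substituting the already-computed Schur-complement formula (\ref{formula:SchurComplDiffPPTOfAAndBMatrixPartitioned}) for $M/M_{22}=[J\operatorname{ppt}(B)-J\operatorname{ppt}(A)]/[J\operatorname{ppt}(B)-J\operatorname{ppt}(A)]_{22}$, is exactly inequality (\ref{cor:NecSuffCondMatricesForMonotJPPT:CondII}). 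Combining the three equivalences with the preliminary observation $0\leq M \iff J\operatorname{ppt}(A)\leq J\operatorname{ppt}(B)$ finishes the proof.

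There is essentially no hard step here: the entire argument is bookkeeping once formulas (\ref{formula:DiffPPTOfAAndBMatrixPartitioned})–(\ref{formula:SchurComplDiffPPTOfAAndBMatrixPartitioned}) and Lemma \ref{lem:NecSuffCondPosMatrixUsingSchurCompl} are available. The only points that require a moment's care are confirming that $M$ and $M_{22}$ are self-adjoint so that the Albert criterion applies, and noting that the Schur complement $M/M_{22}$ in the lemma is the generalized one, formed with $M_{22}^{+}=(A_{22}^{+}-B_{22}^{+})^{+}$, consistent with the convention used throughout the paper and with the explicit expression (\ref{formula:SchurComplDiffPPTOfAAndBMatrixPartitioned}).
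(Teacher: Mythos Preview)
Your proof is correct and follows exactly the paper's approach: observe that $J\operatorname{ppt}(B)-J\operatorname{ppt}(A)$ is self-adjoint, then apply the Albert criterion (Lemma \ref{lem:NecSuffCondPosMatrixUsingSchurCompl}) using the block formulas (\ref{formula:DiffPPTOfAAndBMatrixPartitioned}) and (\ref{formula:SchurComplDiffPPTOfAAndBMatrixPartitioned}). The only difference is that you spell out the bookkeeping in more detail than the paper does.
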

\begin{proof}
    Assume the hypotheses. Then $[J\operatorname{ppt}(A)]^*=J\operatorname{ppt}(A), [J\operatorname{ppt}(B)]^*=J\operatorname{ppt}(B)$ so that $[J\operatorname{ppt}(B)-J\operatorname{ppt}(A)]^*=J\operatorname{ppt}(B)-J\operatorname{ppt}(A)$, and the proof now follows from this immediately by Lemma \ref{lem:NecSuffCondPosMatrixUsingSchurCompl} using (\ref{formula:DiffPPTOfAAndBMatrixPartitioned}) and (\ref{formula:SchurComplDiffPPTOfAAndBMatrixPartitioned}).
\end{proof}

Now we explore some necessary and sufficient conditions for (\ref{cor:NecSuffCondMatricesForMonotJPPT:CondI}) and (\ref{cor:NecSuffCondMatricesForMonotJPPT:CondII}) to be true. To do this we need some auxiliary results. First, it is clear from Corollary \ref{cor:NecSuffCondMatricesForMonotJPPT} and the statement of Theorem \ref{thm:IntroMainThmPaperMonoGPPT} that it is important to know under what additional conditions, besides $A_{22}^*=A_{22}\leq B_{22}=B_{22}^*$, do we need in order to guarantee that $B_{22}^+\leq A_{22}^+$ is true? The following well-known statement on the monotonicity of the Moore-Penrose pseudoinverse (see, for instance, \cite[Lemma 4, Theorem 2]{90BN}, \cite[Theorem 3.8]{14BH}, \cite{96EL, 97JG}, \cite[Chap.\ 8]{10MB}) is a satisfactory answer for our purposes.
\begin{lemma}\label{lem:MonotMoorePenosePseudoInv}
    If $C^*=C, D^*=D\in M_m(\mathbb{F})$ and $C\leq D$ then
    \begin{gather}
        D^+\leq C^+ \iff \ker C=\ker D,\; i_-(C)=i_-(D),
    \end{gather}
    where $i_-(\cdot)$ denotes the number of negative eigenvalues  (counting multiplicities) of a self-adjoint matrix. In particular, if $0\leq C\leq D$ then $D^+\leq C^+$ if and only if $\ker C=\ker D$.
\end{lemma}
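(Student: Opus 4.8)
## Proof plan for Lemma \ref{lem:MonotMoorePenosePseudoInv}

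The plan is to reduce everything to a simultaneous-diagonalization picture and then read off the inequality from the scalar case. First I would observe that the ``in particular'' clause follows from the main equivalence: if $0 \le C \le D$ then $i_-(C) = i_-(D) = 0$ automatically, so the right-hand side collapses to $\ker C = \ker D$. So the work is entirely in proving
\[
    C \le D \ \Longrightarrow\ \bigl( D^+ \le C^+ \iff \ker C = \ker D \text{ and } i_-(C) = i_-(D) \bigr).
\]

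For the forward direction of the inner equivalence, assume $D^+ \le C^+$. Combined with $C \le D$, a standard fact (Weyl monotonicity / interlacing applied to both orderings) forces the inertias to match up: since $C \le D$ we have $i_-(D) \le i_-(C)$ (adding a positive semidefinite matrix cannot create negative eigenvalues), and since $D^+ \le C^+$ we likewise get $i_-(C^+) \le i_-(D^+)$, i.e. $i_-(C) \le i_-(D)$ because $X$ and $X^+$ have the same inertia. Hence $i_-(C) = i_-(D)$, and by the same argument applied to $i_+$ together with $i_0(X) = i_0(X^+) = \dim\ker X$, we also get $\dim \ker C = \dim \ker D$. To upgrade equality of dimensions to $\ker C = \ker D$: if $x \in \ker D$ then $0 \le (x, Dx) = 0$ forces, via $C \le D$, that $(x,Cx) \le 0$; one then uses $D^+ \le C^+$ on $Dx = 0$ together with the projection identities to squeeze $Cx = 0$ — I would make this precise by writing $C = D - E$ with $E \ge 0$ and chasing ranges, using that $\ker D \subseteq \ker E$ would give one inclusion directly. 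The cleanest route is probably: $\ker D \subseteq \ker C$ follows because $x \in \ker D$ gives $(x,Ex) = (x,Dx) - (x,Cx) = -(x,Cx) \ge 0$ while $E \ge 0$ gives $(x,Ex)\ge 0$... this needs care, so I'd instead argue the kernel equality from $D^+ \le C^+$ symmetric to $C \le D$ and the dimension count.

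For the reverse direction — the substantive half — assume $C \le D$, $\ker C = \ker D =: \mathcal{K}$, and $i_-(C) = i_-(D)$. Restricting to $\mathcal K^\perp$, both $C$ and $D$ act as invertible self-adjoint operators on the same space, and $C^+, D^+$ are just their inverses there (and both vanish on $\mathcal K$), so it suffices to prove the statement for invertible self-adjoint $C \le D$ with $i_-(C) = i_-(D) = k$: then $D^{-1} \le C^{-1}$. Here I would diagonalize: write $C = S J_C S^*$ does not quite work since $C,D$ aren't simultaneously congruent to the same $J$ in an obvious way, so instead I'd use the following trick. Since $C \le D$ and both are invertible with the same negative index, the pencil stays nondegenerate; consider $C^{-1} - D^{-1} = C^{-1}(D - C)D^{-1}$. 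This is congruent to $D - C \ge 0$ only when $C^{-1}D^{-1}$-type factors are arranged symmetrically, which they are not. The correct classical argument: because $i_-(C) = i_-(D) = k$, there is an invertible $T$ with $T^* C T = \operatorname{diag}(-I_k, I_{m-k})$ and simultaneously $T^* D T$ diagonal (simultaneous diagonalization of the pair $(C, D-C)$ after an initial congruence making $C$ a signature matrix, using that $D - C \ge 0$). Then $T^* C T = \Lambda_C$, $T^* D T = \Lambda_D$ are diagonal with $\Lambda_C \le \Lambda_D$ entrywise and the same sign pattern (same negative index, no zeros), hence $\Lambda_D^{-1} \le \Lambda_C^{-1}$ entrywise — this is the scalar fact $0 > a \ge b \Rightarrow 1/a \le 1/b$ on the negative block and $0 < a \le b \Rightarrow 1/a \ge 1/b$ on the positive block, both giving $\Lambda_D^{-1} \le \Lambda_C^{-1}$. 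Finally $D^{-1} = T \Lambda_D^{-1} T^*$ and $C^{-1} = T \Lambda_C^{-1} T^*$, so $C^{-1} - D^{-1} = T(\Lambda_C^{-1} - \Lambda_D^{-1})T^* \ge 0$, i.e. $D^{-1} \le C^{-1}$, and lifting back via the orthogonal decomposition $\mathbb F^m = \mathcal K \oplus \mathcal K^\perp$ gives $D^+ \le C^+$.

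The main obstacle I anticipate is justifying the simultaneous diagonalization / congruence step in the reverse direction when $C$ is indefinite: the usual ``congruence to $I$ then orthogonally diagonalize the other matrix'' only works directly when one of the matrices is positive definite. The fix is to first congruence-transform $C$ to a signature matrix $J' = \operatorname{diag}(-I_k, I_{m-k})$, then exploit that $D - C \ge 0$ is preserved as a positive semidefinite matrix $E'$ under the same congruence, and diagonalize within each eigenspace of $J'$ — but cross terms of $E'$ between the $\pm$ blocks must be handled, which is where the hypothesis $i_-(C) = i_-(D)$ becomes essential (it prevents the pencil $C + sE$ from changing inertia on $[0,1]$, equivalently from becoming singular, which is exactly what is needed for the argument and also exactly what links this lemma to condition $(c)$ of Theorem \ref{thm:IntroMainThmPaperMonoGPPT}). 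I expect the paper may instead cite one of the references \cite{90BN, 14BH} for this step rather than reproving it, and I would be inclined to do the same while keeping the inertia bookkeeping above explicit.
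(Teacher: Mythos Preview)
The paper does not prove this lemma at all: it is stated as a well-known result and attributed to \cite{90BN, 14BH, 96EL, 97JG, 10MB} without argument. You correctly anticipated this, and your closing suggestion---to cite the literature while keeping the inertia bookkeeping explicit---matches the paper's treatment exactly.

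That said, the proof you sketch for the reverse direction has a genuine gap: the simultaneous congruence-diagonalization of $C$ and $D$ need not exist. Take
\[
C=\begin{pmatrix}1&0\\0&-1\end{pmatrix},\qquad D=\begin{pmatrix}2&1\\1&0\end{pmatrix}.
\]
Then $D-C=\begin{pmatrix}1&1\\1&1\end{pmatrix}\ge 0$, both matrices are invertible, and $i_-(C)=i_-(D)=1$, so all your hypotheses hold; yet $C^{-1}D=\begin{pmatrix}2&1\\-1&0\end{pmatrix}$ has a single $2\times 2$ Jordan block at the eigenvalue $1$ and is not diagonalizable. Since simultaneous diagonalization by congruence would force $C^{-1}D=T\Lambda_C^{-1}\Lambda_D T^{-1}$ to be diagonalizable, no such $T$ exists here. (One checks directly that $C^{-1}-D^{-1}=\begin{pmatrix}1&-1\\-1&1\end{pmatrix}\ge 0$, so the lemma's conclusion does hold for this pair.) The remedy you yourself point to---that $i_-(C)=i_-(D)$ keeps the pencil $(1-t)C+tD$ invertible on $[0,1]$---is the correct route: then $\frac{d}{dt}[(1-t)C+tD]^{-1}=-[(1-t)C+tD]^{-1}(D-C)[(1-t)C+tD]^{-1}\le 0$ integrates from $0$ to $1$ to give $D^{-1}\le C^{-1}$ with no diagonalization needed. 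This is precisely the mechanism behind Lemma~\ref{lem:MonotMPInvSpectralCharacterization} and the paper's proof of $(b)\Leftrightarrow(c)$ in Theorem~\ref{thm:IntroMainThmPaperMonoGPPT}. Your forward-direction inertia argument establishing $i_-(C)=i_-(D)$ and $\dim\ker C=\dim\ker D$ is correct; the upgrade to $\ker C=\ker D$ remains, as you acknowledge, unfinished.
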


Next, we need the following extension of \cite[Lemma 6.1]{01CW}. In our lemma, formula (\ref{Cond1AitkenBlockDiagFormula}) with (\ref{AitkenBlockDiagFormulaForBlocksWZ}) may appropriately be a called a generalized Aitken block-diagonalization formula \{cf.\ Eqs.\ (0.9.1) and (6.0.20) in Secs.\ 0.9 and 6.0.4, respectively, in  \cite{05FZ}\}.
\begin{lemma}\label{lem:GenLem6_1In01CW}
   If $A=[A_{ij}]_{i,j=1,2}\in M_n(\mathbb{F})$ then
   \begin{gather}
       \begin{bmatrix}
       I_{n_1} & -X\\
       0 & I_{n_2}
       \end{bmatrix}A\begin{bmatrix}
       I_{n_1} & 0\\
       -Y & I_{n_2}
       \end{bmatrix}=\begin{bmatrix}
       W & 0\\
       0 & Z
       \end{bmatrix}\label{Cond1AitkenBlockDiagFormula}
   \end{gather}
   for some $X\in M_{n_1\times n_2}(\mathbb{F}), Y\in M_{n_2\times n_1}(\mathbb{F}), Z\in M_{n_2}(\mathbb{F}), W\in M_{n_1}(\mathbb{F})$
   if and only if 
   \begin{gather}
       \ker A_{22}\subseteq \ker A_{12}\text{ and }\operatorname{ran}A_{21}\subseteq \operatorname{ran}A_{22},\label{Cond2AitkenBlockDiagFormula}
   \end{gather}
   in which case
   \begin{gather}
       Z=A_{22}, W=A/A_{22}.\label{AitkenBlockDiagFormulaForBlocksWZ}
   \end{gather}
\end{lemma}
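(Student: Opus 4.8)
The plan is to expand the triple product on the left of (\ref{Cond1AitkenBlockDiagFormula}) explicitly, read off which equations on $X,Y,Z,W$ force it to be block diagonal, and then translate those matrix equations into the subspace inclusions (\ref{Cond2AitkenBlockDiagFormula}). First I would carry out the block multiplication to obtain
\[
\begin{bmatrix} I_{n_1} & -X\\ 0 & I_{n_2}\end{bmatrix}A\begin{bmatrix} I_{n_1} & 0\\ -Y & I_{n_2}\end{bmatrix}
=\begin{bmatrix} A_{11}-XA_{21}-(A_{12}-XA_{22})Y & A_{12}-XA_{22}\\ A_{21}-A_{22}Y & A_{22}\end{bmatrix}.
\]
Hence the right-hand side of (\ref{Cond1AitkenBlockDiagFormula}) holds for some $X,Y,Z,W$ if and only if there exist $X,Y$ with $XA_{22}=A_{12}$ and $A_{22}Y=A_{21}$, in which case necessarily $Z=A_{22}$ and, using $A_{12}-XA_{22}=0$, the $(1,1)$-block reduces to $W=A_{11}-XA_{21}$.

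For the implication ``(\ref{Cond2AitkenBlockDiagFormula}) $\Rightarrow$ (\ref{Cond1AitkenBlockDiagFormula})'' I would take the explicit witnesses $X=A_{12}A_{22}^{+}$ and $Y=A_{22}^{+}A_{21}$. Since $I_{n_2}-A_{22}^{+}A_{22}$ is the orthogonal projection of $\mathbb{F}^{n_2}$ onto $\ker A_{22}$, the hypothesis $\ker A_{22}\subseteq\ker A_{12}$ gives $A_{12}(I_{n_2}-A_{22}^{+}A_{22})=0$, i.e.\ $XA_{22}=A_{12}A_{22}^{+}A_{22}=A_{12}$; dually, $A_{22}A_{22}^{+}$ being the orthogonal projection onto $\operatorname{ran}A_{22}$ together with $\operatorname{ran}A_{21}\subseteq\operatorname{ran}A_{22}$ gives $A_{22}Y=A_{22}A_{22}^{+}A_{21}=A_{21}$. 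Then by the first paragraph $Z=A_{22}$ and $W=A_{11}-XA_{21}=A_{11}-A_{12}A_{22}^{+}A_{21}=A/A_{22}$, which is (\ref{AitkenBlockDiagFormulaForBlocksWZ}).

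For the converse ``(\ref{Cond1AitkenBlockDiagFormula}) $\Rightarrow$ (\ref{Cond2AitkenBlockDiagFormula})'' I would start from block diagonality, so by the first paragraph $A_{12}=XA_{22}$ and $A_{21}=A_{22}Y$ for the given $X,Y$. Applying the first identity to $v\in\ker A_{22}$ yields $A_{12}v=0$, so $\ker A_{22}\subseteq\ker A_{12}$; the second identity gives at once $\operatorname{ran}A_{21}=\operatorname{ran}(A_{22}Y)\subseteq\operatorname{ran}A_{22}$. It remains to pin down $W$ independently of the particular solutions $X,Y$: left-multiplying $A_{22}Y=A_{21}$ by $A_{22}^{+}$ gives $A_{22}^{+}A_{22}Y=A_{22}^{+}A_{21}$, so $Y-A_{22}^{+}A_{21}\in\ker(A_{22}^{+}A_{22})=\ker A_{22}\subseteq\ker A_{12}$, whence $A_{12}Y=A_{12}A_{22}^{+}A_{21}$ and therefore $XA_{21}=(XA_{22})Y=A_{12}Y=A_{12}A_{22}^{+}A_{21}$; thus $W=A_{11}-XA_{21}=A/A_{22}$ and $Z=A_{22}$, establishing (\ref{AitkenBlockDiagFormulaForBlocksWZ}).

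The whole argument is a direct computation, so there is no genuine obstacle; the only step needing care is the bookkeeping of the standard Moore--Penrose identities (that $A_{22}A_{22}^{+}$ and $A_{22}^{+}A_{22}$ are the orthogonal projections onto $\operatorname{ran}A_{22}$ and $\operatorname{ran}A_{22}^{*}$, with kernels $\ker A_{22}^{*}$ and $\ker A_{22}$), which is exactly what converts $XA_{22}=A_{12}$ and $A_{22}Y=A_{21}$ into the stated subspace inclusions and forces $W=A/A_{22}$ regardless of which solutions $X,Y$ were used.
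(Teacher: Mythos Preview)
Your proof is correct and follows essentially the same approach as the paper: both directions expand the triple product, use the explicit witnesses $X=A_{12}A_{22}^{+}$, $Y=A_{22}^{+}A_{21}$ for the implication $(\ref{Cond2AitkenBlockDiagFormula})\Rightarrow(\ref{Cond1AitkenBlockDiagFormula})$, and read off $A_{12}=XA_{22}$, $A_{21}=A_{22}Y$ for the converse. The only cosmetic difference is in pinning down $W$ in the converse: the paper inserts $A_{22}=A_{22}A_{22}^{+}A_{22}$ to get $XA_{22}Y=(XA_{22})A_{22}^{+}(A_{22}Y)=A_{12}A_{22}^{+}A_{21}$ in one line, whereas you route through the kernel inclusion to show $A_{12}Y=A_{12}A_{22}^{+}A_{21}$; both are equally valid.
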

\begin{proof}
    Assume the hypotheses, where $A_{ij}\in M_{n_i\times n_j}(\mathbb{F})$ for $i,j=1,2$. First, using the fact that 
    \begin{gather*}
     A_{22}^+A_{22}A_{22}^+=A_{22}^+,   
    \end{gather*}
    it follows by block multiplication that
    \begin{gather*}
    \begin{bmatrix}
       I_{n_1} & -A_{12}A_{22}^+\\
       0 & I_{n_2}
       \end{bmatrix}\begin{bmatrix}
       A_{11} & A_{12}\\
       A_{21} & A_{22}
       \end{bmatrix}\begin{bmatrix}
       I_{n_1} & 0\\
       -A_{22}^+A_{21} & I_{n_2}
       \end{bmatrix}\notag\\
       =\begin{bmatrix}
       A/A_{11} & A_{12}-A_{12}A_{22}^+A_{22}\\
       A_{21}-A_{22}A_{22}^+A_{21} & A_{22}
       \end{bmatrix}.\label{formula:GenAitkenBlkDiagInABlkForm}
   \end{gather*}
   Second, we have
   \begin{gather*}
       \ker A_{22}\subseteq \ker A_{12} \iff A_{12}-A_{12}A_{22}^+A_{22}=0.
   \end{gather*}
   Third, we have
   \begin{gather*}
       \operatorname{ran}A_{21}\subseteq \operatorname{ran}A_{22} \iff A_{21}-A_{22}A_{22}^+A_{21}=0.
   \end{gather*}
   This proves that
       \begin{gather*}
       \begin{bmatrix}
       I_{n_1} & -A_{12}A_{22}^+\\
       0 & I_{n_2}
       \end{bmatrix}\begin{bmatrix}
       A_{11} & A_{12}\\
       A_{21} & A_{22}
       \end{bmatrix}\begin{bmatrix}
       I_{n_1} & 0\\
       -A_{22}^+A_{21} & I_{n_2}
       \end{bmatrix}
       =\begin{bmatrix}
       A/A_{11} & 0\\
       0 & A_{22}
    \end{bmatrix}.\label{formula:KeyGenAitkenBlkDiagInABlkForm}
   \end{gather*}
   ($\Leftarrow$): Suppose the inclusions (\ref{Cond2AitkenBlockDiagFormula}) hold. Then it follows from these facts that \ref{Cond1AitkenBlockDiagFormula} is true, where we can take $X=A_{12}A_{22}^+, Y=A_{22}^+A_{21}, Z=A_{22},W=A/A_{22}$. ($\Rightarrow$): Conversely, suppose (\ref{Cond1AitkenBlockDiagFormula}) holds for some $X\in M_{n_1\times n_2}(\mathbb{F}), Y\in M_{n_2\times n_1}(\mathbb{F}), Z\in M_{n_2}(\mathbb{F}), W\in M_{n_1}(\mathbb{F})$. Then
    \begin{gather*}
        A= \begin{bmatrix}
       A_{11} & A_{12}\\
       A_{21} & A_{22}
       \end{bmatrix}
       =\begin{bmatrix}
       I_{n_1} & X\\
       0 & I_{n_2}
       \end{bmatrix}\begin{bmatrix}
       W & 0\\
       0 & Z
       \end{bmatrix}\begin{bmatrix}
       I_{n_1} & 0\\
       Y & I_{n_2}
       \end{bmatrix}\\
       =\begin{bmatrix}
       W & XZ\\
       0 & Z
       \end{bmatrix}\begin{bmatrix}
       I_{n_1} & 0\\
       Y & I_{n_2}
       \end{bmatrix}\\
       =\begin{bmatrix}
       W+XZY & XZ\\
       ZY & Z
       \end{bmatrix}
    \end{gather*}
    implying
    \begin{gather*}
        A_{22}=Z, A_{12}=XZ,  A_{21}=ZY, \\
        W=A_{11}-XZY=A_{11}-XZZ^+ZY=A_{11}-A_{12}A_{22}^+A_{21}=A/A_{22},
    \end{gather*}
    where we have used the fact that 
        \begin{gather*}
     A_{22}A_{22}^+A_{22}=A_{22}.   
    \end{gather*}
    From these equalities it follows  immediately that (\ref{Cond2AitkenBlockDiagFormula}) and (\ref{AitkenBlockDiagFormulaForBlocksWZ}) are true.
\end{proof}

The next proposition is the key result we need to prove that $(a)\Leftrightarrow (b)$ in Theorem \ref{thm:IntroMainThmPaperMonoGPPT}.
\begin{proposition}\label{prop:KerRanConditionForJPPTMonoResult}
    If $A=[A_{ij}]_{i,j=1,2}, B=[B_{ij}]_{i,j=1,2}\in M_n(\mathbb{F})$ with $\ker A_{22}=\ker B_{22}$, $\operatorname{ran} A_{22}=\operatorname{ran} B_{22}$, $\ker (B_{22}-A_{22})\subseteq\ker (B_{12}-A_{12})$, and $\operatorname{ran} (B_{21}-A_{21})\subseteq\operatorname{ran} (B_{22}-A_{22})$ then
    \begin{gather}
        \ker(A_{22}^+-B_{22}^+)\subseteq \ker(B_{12}B_{22}^+-A_{12}A_{22}^+),\label{Step1KerRanConditionForJPPTMonoResult}\\
        \operatorname{ran}(B_{22}^+B_{21}-A_{22}^+A_{21})\subseteq\operatorname{ran}(A_{22}^+-B_{22}^+).\label{Step2KerRanConditionForJPPTMonoResult}
    \end{gather}
    Furthermore,
    \begin{gather}
    (A_{12}A_{22}^+-B_{12}B_{22}^+)(A_{22}^+-B_{22}^+)^+(A_{22}^+A_{21}-B_{22}^+B_{21})\notag\\
    =(A_{12}A_{22}^+-B_{12}B_{22}^+)[A_{22}+A_{22}(B_{22}-A_{22})^+A_{22}](A_{22}^+A_{21}-B_{22}^+B_{21}).\label{Step3KerRanConditionForJPPTMonoResult}
    \end{gather}
    Moreover,
    \begin{align}
        (B-A)/(B-A)_{22}&=B/B_{22}-A/A_{22}\notag\\
        &\quad-(B_{12}B_{22}^+-A_{12}A_{22}^+)(A_{22}^+-B_{22}^+)^+(B_{22}^+B_{21}-A_{22}^+A_{21}).\label{formula:EqDiffSchurComplAndDiffPPT}
    \end{align}
\end{proposition}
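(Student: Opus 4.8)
The plan is to reduce all four assertions to a small toolkit of identities forced by the shared‑kernel/shared‑range hypotheses, and then to read each one off. Throughout, write $D:=B_{22}-A_{22}$, let $P:=A_{22}A_{22}^+=B_{22}B_{22}^+$ be the orthogonal projection onto $\operatorname{ran}(A_{22})=\operatorname{ran}(B_{22})$, and let $Q:=A_{22}^+A_{22}=B_{22}^+B_{22}$ be the orthogonal projection onto $\operatorname{ran}(A_{22}^*)=\operatorname{ran}(B_{22}^*)$ (these representations agree since $\ker A_{22}=\ker B_{22}$). The first thing I would establish is the toolkit: (i) the resolvent‑type identity $A_{22}^+-B_{22}^+=A_{22}^+D\,B_{22}^+=B_{22}^+D\,A_{22}^+$, together with its one‑sided forms $A_{22}(A_{22}^+-B_{22}^+)=DB_{22}^+$ and $(A_{22}^+-B_{22}^+)A_{22}=B_{22}^+D$, all of which drop out of the relations $A_{22}^+P=A_{22}^+$, $QB_{22}^+=B_{22}^+$ (and their $A\leftrightarrow B$ analogues) together with $A_{22}A_{22}^+=B_{22}B_{22}^+$ and $A_{22}^+A_{22}=B_{22}^+B_{22}$; (ii) the absorption rules $PD=D=DQ$ and $QD^+=D^+=D^+P$, valid because $\operatorname{ran}D\subseteq\operatorname{ran}A_{22}$ and $\ker A_{22}\subseteq\ker D$ (the latter since $A_{22}$ and $B_{22}$ both vanish on their common kernel); (iii) the reductions $B_{12}-A_{12}=(B_{12}-A_{12})D^+D$ and $B_{21}-A_{21}=DD^+(B_{21}-A_{21})$, which are literally the last two hypotheses.

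Next I would prove (\ref{Step1KerRanConditionForJPPTMonoResult}) and (\ref{Step2KerRanConditionForJPPTMonoResult}) by explicit factorizations through $G:=A_{22}^+-B_{22}^+$. Splitting $B_{12}B_{22}^+-A_{12}A_{22}^+=A_{12}(B_{22}^+-A_{22}^+)+(B_{12}-A_{12})B_{22}^+$, using (iii) and then $DB_{22}^+=A_{22}G$, one gets $B_{12}B_{22}^+-A_{12}A_{22}^+=M_1G$ with $M_1:=-A_{12}+(B_{12}-A_{12})D^+A_{22}$; since a left multiple of $G$ annihilates $\ker G$, this yields $\ker G\subseteq\ker(B_{12}B_{22}^+-A_{12}A_{22}^+)$, which is (\ref{Step1KerRanConditionForJPPTMonoResult}). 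The mirror‑image splitting of $B_{22}^+B_{21}-A_{22}^+A_{21}$, now using $B_{22}^+D=GA_{22}$, gives $B_{22}^+B_{21}-A_{22}^+A_{21}=GM_2$ with $M_2:=-A_{21}+A_{22}D^+(B_{21}-A_{21})$, whence $\operatorname{ran}(B_{22}^+B_{21}-A_{22}^+A_{21})\subseteq\operatorname{ran}G$, which is (\ref{Step2KerRanConditionForJPPTMonoResult}).

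For (\ref{Step3KerRanConditionForJPPTMonoResult}), the two factorizations give $A_{12}A_{22}^+-B_{12}B_{22}^+=-M_1G$ and $A_{22}^+A_{21}-B_{22}^+B_{21}=-GM_2$, so its left‑hand side equals $M_1(GG^+G)M_2=M_1GM_2$ and its right‑hand side equals $M_1\,G(A_{22}+A_{22}D^+A_{22})G\,M_2$; thus it suffices to prove the reflexive‑inverse identity $G=G(A_{22}+A_{22}D^+A_{22})G$. Using $GA_{22}=B_{22}^+D$ and $A_{22}G=DB_{22}^+$ one finds $GA_{22}D^+A_{22}G=B_{22}^+(DD^+D)B_{22}^+=B_{22}^+DB_{22}^+$, while $GA_{22}G=B_{22}^+DG=B_{22}^+DA_{22}^+-B_{22}^+DB_{22}^+=G-B_{22}^+DB_{22}^+$ by the resolvent identity, and adding the two recovers $G$. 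Finally, for (\ref{formula:EqDiffSchurComplAndDiffPPT}) I would invoke Lemma \ref{lem:GenLem6_1In01CW} applied to $C:=B-A$, whose blocks satisfy condition (\ref{Cond2AitkenBlockDiagFormula}) precisely because of the last two hypotheses, obtaining $(B-A)/(B-A)_{22}=(B_{11}-A_{11})-(B_{12}-A_{12})D^+(B_{21}-A_{21})$; since also $B/B_{22}-A/A_{22}=(B_{11}-A_{11})-(B_{12}B_{22}^+B_{21}-A_{12}A_{22}^+A_{21})$ and the correction term in (\ref{formula:EqDiffSchurComplAndDiffPPT}) (the same term occurring in (\ref{formula:SchurComplDiffPPTOfAAndBMatrixPartitioned})) equals $M_1GM_2$ by the computation just made, (\ref{formula:EqDiffSchurComplAndDiffPPT}) reduces to $(B_{12}-A_{12})D^+(B_{21}-A_{21})=(B_{12}B_{22}^+B_{21}-A_{12}A_{22}^+A_{21})+M_1GM_2$, which I would verify by expanding $M_1GM_2$ and repeatedly applying the one‑sided resolvent identities, (ii), (iii), and $A_{22}=B_{22}-D$, after which all cross terms cancel and only $(B_{12}-A_{12})D^+(B_{21}-A_{21})$ survives.

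The step I expect to be most delicate is not any single one of these but the consistent bookkeeping across (\ref{Step3KerRanConditionForJPPTMonoResult}) and the final cancellation: those are exactly the places where the Moore--Penrose pseudoinverse $(A_{22}^+-B_{22}^+)^+$ of a \emph{difference} --- whose rank need not equal that of $D$, so that naive formulas such as $(XYZ)^+=Z^+Y^+X^+$ are unavailable --- must be handled only through the weak identity $GG^+G=G$ and then traded for the explicit, rank‑robust surrogate $A_{22}+A_{22}D^+A_{22}$; applying the absorption rules (ii) and (iii) on the correct side at each step is what makes the cross terms disappear.
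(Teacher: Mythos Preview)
Your proposal is correct, and at its center it rests on the same identity the paper proves, namely $G\bigl[A_{22}+A_{22}(B_{22}-A_{22})^+A_{22}\bigr]G=G$ with $G=A_{22}^+-B_{22}^+$; however, the organization and several of the surrounding arguments are genuinely different. For (\ref{Step1KerRanConditionForJPPTMonoResult})--(\ref{Step2KerRanConditionForJPPTMonoResult}) the paper argues via a chain of intermediate kernel inclusions, whereas you give the explicit factorizations $B_{12}B_{22}^+-A_{12}A_{22}^+=M_1G$ and $B_{22}^+B_{21}-A_{22}^+A_{21}=GM_2$, which is more constructive and makes (\ref{Step3KerRanConditionForJPPTMonoResult}) immediate once the reflexive-inverse identity for $G$ is known. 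For that identity the paper does a long line-by-line expansion, while your two-line argument via the resolvent relations $G=B_{22}^+DA_{22}^+$, $GA_{22}=B_{22}^+D$, $A_{22}G=DB_{22}^+$ is markedly shorter. The most substantive difference is in (\ref{formula:EqDiffSchurComplAndDiffPPT}): the paper conjugates $B-A$ by the triangular matrices associated with $B$ (not with $B-A$), identifies the resulting $2\times 2$ block form, and then invokes Lemma~\ref{lem:GenLem6_1In01CW} together with the auxiliary identities $(I-B_{22}^+B_{22})(B_{22}-A_{22})^+=0$ and its dual; you instead apply Lemma~\ref{lem:GenLem6_1In01CW} directly to $B-A$ and reduce everything to the single algebraic identity $(B_{12}-A_{12})D^+(B_{21}-A_{21})=(B_{12}B_{22}^+B_{21}-A_{12}A_{22}^+A_{21})+M_1GM_2$, which indeed drops out after expanding $(M_1G)M_2$ and using the absorption rules. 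Your route is more compact and arguably more transparent; the paper's block-conjugation route has the advantage of displaying an explicit Aitken-type factorization of $B-A$ relative to the $B$-based triangular transforms, which is of some independent interest.
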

\begin{proof}
        Assume the hypotheses. Then as $\ker (B_{22}-A_{22})\subseteq\ker (B_{12}-A_{12})$, it follows that
    \begin{gather}
    \ker (B_{22}-A_{22})\subseteq \ker [(B_{12}B_{22}^+A_{22}-A_{12})+B_{12}(I_{n_2}-B_{22}^+B_{22})].\label{InclusionPart1ForAitkenBlkDiagFormula}
\end{gather}
Next, as $\ker A_{22}=\ker B_{22}$, $\operatorname{ran} A_{22}=\operatorname{ran} B_{22}$, then $A_{22}A_{22}^+=B_{22}B_{22}^+,$ and $ A_{22}^+A_{22}=B_{22}^+B_{22}$ from which it follows, using also the identity $A_{22}^+A_{22}A_{22}^+=A_{22}$, that
\begin{gather*}
    [(B_{12}B_{22}^+A_{22}-A_{12})+B_{12}(I_{n_2}-B_{22}^+B_{22})]A_{22}^+=B_{12}B_{22}^+-A_{12}A_{22}^+,\\
    B_{22}A_{22}^+-A_{22}A_{22}^+=B_{22}A_{22}^+-B_{22}B_{22}^+=B_{22}(A_{22}^+-B_{22}^+).
\end{gather*}
It now follows immediately from these facts that
\begin{gather*}
    \ker(B_{22}A_{22}^+-A_{22}A_{22}^+)\subseteq \ker(B_{12}B_{22}^+-A_{12}A_{22}^+),\\
        \ker(A_{22}^+-B_{22}^+)\subseteq\ker(B_{22}(A_{22}^+-B_{22}^+))=\ker(B_{22}A_{22}^+-A_{22}A_{22}^+),
\end{gather*}
which implies
\begin{gather*}
        \ker(A_{22}^+-B_{22}^+)\subseteq \ker(B_{22}A_{22}^+-A_{22}A_{22}^+)\subseteq \ker(B_{12}B_{22}^+-A_{12}A_{22}^+).
    \end{gather*}
Now since $\operatorname{ran} (B_{21}-A_{21})\subseteq\operatorname{ran} (B_{22}-A_{22})$ implies $\ker ((B^*)_{22}-(A^*)_{22})\subseteq\ker ((B^*)_{12}-(A^*)_{12})$, then by interchanging the role of $A, B$ with $A^*, B^*$ in the proof we just gave and taking orthogonal complements, it follows that
    \begin{gather*}
        \operatorname{ran}[(A_{22}B_{22}^+B_{21}-A_{21})+(I_{n_2}-B_{22}B_{22}^+)B_{21}]\subseteq\operatorname{ran} (B_{22}-A_{22}),\label{InclusionPart2ForAitkenBlkDiagFormula}\\
        \operatorname{ran}(B_{22}^+B_{21}-A_{22}^+A_{21})\subseteq\operatorname{ran}(A_{22}^+-B_{22}^+).
    \end{gather*}
Thus, we have proven the inclusions (\ref{Step1KerRanConditionForJPPTMonoResult}) and (\ref{Step2KerRanConditionForJPPTMonoResult}) hold. Next, it follows from these inclusions that 
    \begin{gather*}
        A_{12}A_{22}^+-B_{12}B_{22}^+=(A_{12}A_{22}^+-B_{12}B_{22}^+)(A_{22}^+-B_{22}^+)^+(A_{22}^+-B_{22}^+),\label{Iden1}\\
        A_{22}^+A_{21}-B_{22}^+B_{21}=(A_{22}^+-B_{22}^+)(A_{22}^+-B_{22}^+)^+(A_{22}^+A_{21}-B_{22}^+B_{21}).\label{Iden2}
    \end{gather*}
Hence, in order to prove the identity (\ref{Step3KerRanConditionForJPPTMonoResult}), we need only prove the claim that
    \begin{gather*}
(A_{22}^+-B_{22}^+)^+(A_{22}^+-B_{22}^+)[A_{22}\!+\!A_{22}(B_{22}\!-\!A_{22})^+A_{22}](A_{22}^+-B_{22}^+)(A_{22}^+-B_{22}^+)^+\\
=(A_{22}^+-B_{22}^+)^+.
    \end{gather*}
First,
\begin{gather*}
    (A_{22}^+-B_{22}^+)[A_{22}+A_{22}(B_{22}-A_{22})^+A_{22}](A_{22}^+-B_{22}^+)\\
    =(A_{22}^+-B_{22}^+)A_{22}(A_{22}^+-B_{22}^+)+(A_{22}^+-B_{22}^+)A_{22}(B_{22}-A_{22})^+A_{22}(A_{22}^+-B_{22}^+)\\
    =(A_{22}^+A_{22}-B_{22}^+A_{22})(A_{22}^+-B_{22}^+)\\+(A_{22}^+-B_{22}^+)A_{22}(B_{22}-A_{22})^+A_{22}(A_{22}^+-B_{22}^+)\\
    =(A_{22}^+A_{22}-B_{22}^+A_{22})A_{22}^+-(A_{22}^+A_{22}-B_{22}^+A_{22})B_{22}^+\\
    +(A_{22}^+-B_{22}^+)A_{22}(B_{22}-A_{22})^+A_{22}(A_{22}^+-B_{22}^+)\\
    =(A_{22}^+A_{22}A_{22}^+-B_{22}^+A_{22}A_{22}^+)-(A_{22}^+A_{22}B_{22}^+-B_{22}^+A_{22}B_{22}^+)\\
    +(A_{22}^+-B_{22}^+)A_{22}(B_{22}-A_{22})^+A_{22}(A_{22}^+-B_{22}^+)\\
    =(A_{22}^+-B_{22}^+B_{22}B_{22}^+)-(B_{22}^+B_{22}B_{22}^+-B_{22}^+A_{22}B_{22}^+)\\
    +(A_{22}^+-B_{22}^+)A_{22}(B_{22}-A_{22})^+A_{22}(A_{22}^+-B_{22}^+)\\
    =A_{22}^+-B_{22}^+\\
    +B_{22}^+A_{22}B_{22}^+-B_{22}^++(A_{22}^+-B_{22}^+)A_{22}(B_{22}-A_{22})^+A_{22}(A_{22}^+-B_{22}^+)\\
    =A_{22}^+-B_{22}^+\\
    +B_{22}^+A_{22}B_{22}^+-B_{22}^++(A_{22}^+A_{22}-B_{22}^+A_{22})(B_{22}-A_{22})^+(A_{22}A_{22}^+-A_{22}B_{22}^+)\\
    =A_{22}^+-B_{22}^+\\
    +B_{22}^+A_{22}B_{22}^+-B_{22}^++(B_{22}^+B_{22}-B_{22}^+A_{22})(B_{22}-A_{22})^+(B_{22}B_{22}^+-A_{22}B_{22}^+)\\
    =A_{22}^+-B_{22}^+\\
    +B_{22}^+A_{22}B_{22}^+-B_{22}^++B_{22}^+(B_{22}-A_{22})(B_{22}-A_{22})^+(B_{22}-A_{22})B_{22}^+\\
    =A_{22}^+-B_{22}^++B_{22}^+A_{22}B_{22}^+-B_{22}^++B_{22}^+(B_{22}-A_{22})B_{22}^+\\
    =A_{22}^+-B_{22}^++B_{22}^+A_{22}B_{22}^+-B_{22}^++(B_{22}^+B_{22}B_{22}^+-B_{22}^+A_{22}B_{22}^+)\\
    =A_{22}^+-B_{22}^+.
\end{gather*}
Second,
\begin{gather*}
    (A_{22}^+-B_{22}^+)^+(A_{22}^+-B_{22}^+)(A_{22}^+-B_{22}^+)^+=(A_{22}^+-B_{22}^+)^+.
\end{gather*}
Thus,
\begin{gather*}
(A_{22}^+-B_{22}^+)^+(A_{22}^+-B_{22}^+)[A_{22}\!+\!A_{22}(B_{22}\!-\!A_{22})^+A_{22}](A_{22}^+-B_{22}^+)(A_{22}^+-B_{22}^+)^+\\
=(A_{22}^+-B_{22}^+)^+(A_{22}^+-B_{22}^+)(A_{22}^+-B_{22}^+)^+\\
=(A_{22}^+-B_{22}^+)^+.
\end{gather*}
This proves the claim, which proves the identity (\ref{Step3KerRanConditionForJPPTMonoResult}).

We are now ready to prove the equality (\ref{formula:EqDiffSchurComplAndDiffPPT}). First, since $A_{22}^+A_{22}=B_{22}^+B_{22}$ and $A_{22}A_{22}^+=B_{22}B_{22}^+$, it follows that
    \begin{gather*}
        \begin{bmatrix}
	            I_{n_1} & -B_{12}B_{22}^+\\
	            0 & I_{n_2}
	    \end{bmatrix}A\begin{bmatrix}
	            I_{n_1} & 0\\
	            -B_{22}^+B_{21} & I_{n_2}
	    \end{bmatrix}\\
     =\begin{bmatrix}
	            I_{n_1} & -B_{12}B_{22}^+\\
	            0 & I_{n_2}
	    \end{bmatrix}\begin{bmatrix}
	            A_{11} & A_{12}\\
	            A_{21} & A_{22}
	    \end{bmatrix}\begin{bmatrix}
	            I_{n_1} & 0\\
	            -B_{22}^+B_{21} & I_{n_2}
	    \end{bmatrix}\\
         =\begin{bmatrix}
	             A/A_{22}+A_{12}(A_{22}^+A_{21}-B_{22}^+B_{21})\\+B_{12}B_{22}^+(-A_{21} +A_{22}B_{22}^+B_{21}) & A_{12}-B_{12}B_{22}^+A_{22} \\ \\
	            A_{21}-A_{22}B_{22}^+B_{21} & A_{22}
	    \end{bmatrix}\\
     =\begin{bmatrix}
	             A/A_{22}+A_{12}A_{22}^+A_{22}(A_{22}^+A_{21}-B_{22}^+B_{21})\\
              -B_{12}B_{22}^+A_{22}(A_{22}^+A_{21} -B_{22}^+B_{21}) & A_{12}-B_{12}B_{22}^+A_{22} \\ \\
	            A_{21}-A_{22}B_{22}^+B_{21} & A_{22}
	    \end{bmatrix}\\
     =\begin{bmatrix}
	             A/A_{22}+(A_{12}A_{22}^+-B_{12}B_{22}^+)A_{22}(A_{22}^+A_{21}-B_{22}^+B_{21}) & A_{12}-B_{12}B_{22}^+A_{22} \\
	            A_{21}-A_{22}B_{22}^+B_{21} & A_{22}
	    \end{bmatrix}.
\end{gather*}
Next, by block multiplication [cf.\ (\ref{formula:GenAitkenBlkDiagInABlkForm})] we have
\begin{gather*}
    \begin{bmatrix}
	            I_{n_1} & -B_{12}B_{22}^+\\
	            0 & I_{n_2}
	    \end{bmatrix}B\begin{bmatrix}
	            I_{n_1} & 0\\
	            -B_{22}^+B_{21} & I_{n_2}
	    \end{bmatrix}
     \\
     =\begin{bmatrix}
	            B/B_{22} & B_{12}(I_{n_2}-B_{22}^+B_{22})\\
	             (I_{n_2}-B_{22}B_{22}^+)B_{21} & B_{22}
	    \end{bmatrix}.
\end{gather*}
This implies that
    \begin{gather*}
        \begin{bmatrix}
	            I_{n_1} & -B_{12}B_{22}^+\\
	            0 & I_{n_2}
	    \end{bmatrix}(B-A)\begin{bmatrix}
	            I_{n_1} & 0\\
	            -B_{22}^+B_{21} & I_{n_2}
	    \end{bmatrix}\\=\begin{bmatrix}
	            B/B_{22} & B_{12}(I_{n_2}-B_{22}^+B_{22})\\
	             (I_{n_2}-B_{22}B_{22}^+)B_{21} & B_{22}
	    \end{bmatrix}\\
     - \begin{bmatrix}
	            I_{n_1} & -B_{12}B_{22}^+\\
	            0 & I_{n_2}
	    \end{bmatrix}A\begin{bmatrix}
	            I_{n_1} & 0\\
	            -B_{22}^+B_{21} & I_{n_2}
	    \end{bmatrix}\\
     = \begin{bmatrix}
	            B/B_{22} & B_{12}(I_{n_2}-B_{22}^+B_{22})\\
	             (I_{n_2}-B_{22}B_{22}^+)B_{21} & B_{22}
	    \end{bmatrix}\\
     -\begin{bmatrix}
	             A/A_{22}+(A_{12}A_{22}^+-B_{12}B_{22}^+)A_{22}(A_{22}^+A_{21}-B_{22}^+B_{21}) & A_{12}-B_{12}B_{22}^+A_{22} \\
	            A_{21}-A_{22}B_{22}^+B_{21} & A_{22}
	    \end{bmatrix}\\
    =\begin{bmatrix}
	            [B/B_{22}-A/A_{22} & [(B_{12}B_{22}^+A_{22}-A_{12})\\
             -(A_{12}A_{22}^+-B_{12}B_{22}^+)A_{22}(A_{22}^+A_{21}-B_{22}^+B_{21})]
             & +B_{12}(I_{n_2}-B_{22}^+B_{22})]\\
	             (A_{22}B_{22}^+B_{21}-A_{21})+(I_{n_2}-B_{22}B_{22}^+)B_{21} & B_{22}-A_{22}
	    \end{bmatrix}.
    \end{gather*}
Next, the inclusions (\ref{InclusionPart1ForAitkenBlkDiagFormula}) and (\ref{InclusionPart2ForAitkenBlkDiagFormula}) together with this block form implies, by the generalized Aitken block diagonal formula 
(\ref{formula:KeyGenAitkenBlkDiagInABlkForm}) in the proof of Lemma \ref{lem:GenLem6_1In01CW}, that
\begin{gather*}
    \begin{bmatrix}
	            I_{n_1}\!\!\! & M\\
	            0 & I_{n_2}
	    \end{bmatrix}\!\!\!\begin{bmatrix}
	            I_{n_1} \!\!\!\!\!& -B_{12}B_{22}^+\\
	            0 & I_{n_2}
	    \end{bmatrix}\!\!(B\!-\!A)\!\!\begin{bmatrix}
	            I_{n_1}\!\!\!\!\!\!\!\!\! & 0\\
	            -B_{22}^+B_{21}\!\!\! & I_{n_2}
	    \end{bmatrix}\!\!\!\begin{bmatrix}
	            I_{n_1}\!\!\! & 0\\
	            N\!\!\! & I_{n_2}
	    \end{bmatrix}
     \!=\!\begin{bmatrix}
	            S\!\!\! & 0\\
	            0\!\!\! & B_{22}-A_{22}
	    \end{bmatrix},
\end{gather*}
where
\begin{gather*}
    M=-[(B_{12}B_{22}^+A_{22}-A_{12})+B_{12}(I_{n_2}-B_{22}^+B_{22})](B_{22}-A_{22})^+,\\
    N=-(B_{22}-A_{22})^+[(A_{22}B_{22}^+B_{21}-A_{21})+(I_{n_2}-B_{22}B_{22}^+)B_{21}],\\
    S=B/B_{22}-A/A_{22}-(A_{12}A_{22}^+-B_{12}B_{22}^+)A_{22}(A_{22}^+A_{21}-B_{22}^+B_{21})-S_1S_2,\\
    S_1=[(B_{12}B_{22}^+A_{22}-A_{12})+B_{12}(I_{n_2}-B_{22}^+B_{22})],\\
    S_2=(B_{22}-A_{22})^+[(A_{22}B_{22}^+B_{21}-A_{21})+(I_{n_2}-B_{22}B_{22}^+)B_{21} ].
\end{gather*}
It also follows from this and Lemma \ref{lem:GenLem6_1In01CW} that
\begin{gather*}
    S=(B-A)/(B-A)_{22}.
\end{gather*}
Next, we claim that
\begin{gather}
  (I_{n_2}-B_{22}^+B_{22})(B_{22}-A_{22})^+=0,
  (B_{22}-A_{22})^+(I_{n_2}-B_{22}B_{22}^+)=0.\label{NeededIdentity}
\end{gather}
First, we know that $I_{n_2}-B_{22}^+B_{22}$ is the orthogonal projection of $\mathbb{F}^{n_2}$ onto $\ker B_{22}$. Second, we know by fundamental properties of the Moore-Penrose pseudoinverse that
\begin{gather*}
    \ker [(B_{22}\!-\!A_{22})^+]^*\!=\!\ker [(B_{22}\!-\!A_{22})^*]^+\!=\!\ker [(B_{22}\!-\!A_{22})^*]^*\!=\!\ker (B_{22}\!-\!A_{22}).
\end{gather*}
Next, we know that $\operatorname{ran}(I_{n_2}-B_{22}^+B_{22})=\ker B_{22}$ and from our assumption that $\ker A_{22}=\ker B_{22}$, it follows that $\ker A_{22}=\ker B_{22}\subseteq \ker (B_{22}-A_{22})$. Putting all this together, we have
\begin{gather*}
    (I_{n_2}-B_{22}^+B_{22})(B_{22}-A_{22})^+=\{[(B_{22}-A_{22})^+]^*(I_{n_2}-B_{22}^+B_{22})\}^*=0^*=0.
\end{gather*}
Now, by interchanging the roll of $A_{22}, B_{22}$ with $(A^*)_{22}, (B^*)_{22}$ in the proof we just gave and using the hypothesis that $\operatorname{ran}(A_{22})=\operatorname{ran}(B_{22})$ [which is equivalent to $\ker (A^*)_{22}=\ker (B^*)_{22}$], it follows that
\begin{gather*}
  (I_{n_2}-(B^*)_{22}^+(B)^*_{22})((B^*)_{22}-(A^*)_{22})^+=0,
\end{gather*}
and then taking adjoints of this proves our claim. Thus, it follows now from the identities (\ref{NeededIdentity}), the equalities $A_{22}^+A_{22}=B_{22}^+B_{22}$ and $A_{22}A_{22}^+=B_{22}B_{22}^+$, and (\ref{Step3KerRanConditionForJPPTMonoResult}) that
\begin{gather*}
    M=-(B_{12}B_{22}^+A_{22}-A_{12})(B_{22}-A_{22})^+,\\
    N=-(B_{22}-A_{22})^+(A_{22}B_{22}^+B_{21}-A_{21}),\\
    S=B/B_{22}-A/A_{22}-(A_{12}A_{22}^+-B_{12}B_{22}^+)A_{22}(A_{22}^+A_{21}-B_{22}^+B_{21})\\
    -(B_{12}B_{22}^+A_{22}-A_{12})(B_{22}-A_{22})^+(A_{22}B_{22}^+B_{21}-A_{21})\\
    =B/B_{22}-A/A_{22}-(A_{12}A_{22}^+-B_{12}B_{22}^+)A_{22}(A_{22}^+A_{21}-B_{22}^+B_{21})\\
    -(B_{12}B_{22}^+A_{22}-A_{12})A_{22}^+A_{22}(B_{22}-A_{22})^+A_{22}A_{22}^+(A_{22}B_{22}^+B_{21}-A_{21})\\
    =B/B_{22}-A/A_{22}-(A_{12}A_{22}^+-B_{12}B_{22}^+)A_{22}(A_{22}^+A_{21}-B_{22}^+B_{21})\\
    -(B_{12}B_{22}^+-A_{12}A_{22}^+)A_{22}(B_{22}-A_{22})^+A_{22}(B_{22}^+B_{21}-A_{22}^+A_{21})\\
    =B/B_{22}-A/A_{22}\\
    -(A_{12}A_{22}^+-B_{12}B_{22}^+)[A_{22}+A_{22}(B_{22}-A_{22})^+A_{22}](A_{22}^+A_{21}-B_{22}^+B_{21})\\
    =
    B/B_{22}-A/A_{22}-(B_{12}B_{22}^+-A_{12}A_{22}^+)(A_{22}^+-B_{22}^+)^+(B_{22}^+B_{21}-A_{22}^+A_{21}).
\end{gather*}
This proves (\ref{formula:EqDiffSchurComplAndDiffPPT}) and completes the proof of the proposition.
\end{proof}

We are now ready to prove $(a)\Leftrightarrow (b)$ in Theorem \ref{thm:IntroMainThmPaperMonoGPPT}.

\begin{proof}[Proof of $(a)\Leftrightarrow (b)$ in Theorem \ref{thm:IntroMainThmPaperMonoGPPT}]
    Assume the hypotheses in Theorem \ref{thm:IntroMainThmPaperMonoGPPT}. [$(a) \Rightarrow (b)$]: Suppose $J\operatorname{ppt}(A)\leq J\operatorname{ppt}(B)$. Then $(B_{22})^+\leq (A_{22})^+$ by Corollary \ref{cor:NecSuffCondMatricesForMonotJPPT} and
    \begin{gather*}
        A/A_{22}=[J\operatorname{ppt}(A)]_{11}\leq [J\operatorname{ppt}(B)]_{11}=B/B_{22}.
    \end{gather*}
    [$(b) \Rightarrow (a)$]: Conversely, suppose $(B_{22})^+\leq (A_{22})^+$. We begin by showing that the hypotheses of Proposition \ref{prop:KerRanConditionForJPPTMonoResult} are satisfied. First, Lemma \ref{lem:MonotMoorePenosePseudoInv} implies that $\ker(A_{22})=\ker(B_{22})$ and hence $\operatorname{ran}(A_{22})=\operatorname{ran}(B_{22})$. Second, since $(B-A)^*=B-A, 0\leq B-A$ then it follows by Lemma \ref{lem:NecSuffCondPosMatrixUsingSchurCompl} that $0\leq (B-A)/(B-A)_{22},\ker(B_{22}-A_{22})\subseteq \ker(B_{12}-A_{12})$ and the latter implies that $\operatorname{ran}(B_{21}-A_{21})\subseteq \operatorname{ran}(B_{22}-A_{22})$. Thus, we have proved the hypotheses of Proposition \ref{prop:KerRanConditionForJPPTMonoResult} are satisfied. The proof of the theorem now follows immediately from this proposition and Corollary \ref{cor:NecSuffCondMatricesForMonotJPPT}.
\end{proof}

Before we move on, let us now compare our Proposition \ref{prop:KerRanConditionForJPPTMonoResult}, the statement that $(b)\Rightarrow (a)$ in Theorem \ref{thm:IntroMainThmPaperMonoGPPT}, and their proof in the self-adjoint case $A^*=A, B^*=B$ to Lemma 2.2 and Lemma 2.3, respectively, of Clement and Wimmer in \cite{01CW}. First, our Proposition \ref{prop:KerRanConditionForJPPTMonoResult} is a stronger result then \cite[Lemma 2.2]{01CW} since, although we both require that $\ker A_{22}=\ker B_{22}, \ker(B_{22}-A_{22})\subseteq \ker (B_{12}-A_{12})$ \{cf.\ (2.8) and (2.9) in \cite{01CW}\}, we don't require the hypotheses $\ker A_{22}\subseteq \ker A_{12}, \ker B_{22}\subseteq \ker B_{12}$ \{cf.\ (2.8) in \cite{01CW}\} in order to get the same conclusion (\ref{formula:EqDiffSchurComplAndDiffPPT}) \{cf.\ (2.10) in \cite{01CW}\}. Second, our Theorem \ref{thm:IntroMainThmPaperMonoGPPT} that $(b)\Rightarrow (a)$, is a stronger result then \cite[Lemma 2.3]{01CW} since we do not need to assume that $\ker B_{22}\subseteq \ker B_{12}$ in order to get the same conclusion (\ref{IntroMainThmPaperMonoSchurCompl}) \{cf.\ (2.14) in \cite{01CW}\}. Moreover, in essence, the main difference in our proof of Proposition \ref{prop:KerRanConditionForJPPTMonoResult} 
and that $(b)\Rightarrow (a)$ in Theorem \ref{thm:IntroMainThmPaperMonoGPPT} vs. the proof of Lemma 2.2 and Lemma 2.3 in \cite{01CW} is that we found a way to avoid using their hypothesis $\ker B_{22}\subseteq \ker B_{12}$ to achieve the same conclusions. The following example is useful to consider in regard to this discussion.
\begin{example}
Let $\mathbb{F}=\mathbb{R}$ or $\mathbb{F}=\mathbb{C}$. Consider the function $J\operatorname{ppt}(\cdot)$ on the following the following $2\times 2$ block matrices $A^*=A=[A_{ij}]_{i,j=1,2}, B^*=B=[B_{ij}]_{i,j=1,2}\in M_4(\mathbb{F})$,
\begin{gather}
    A=\left[\begin{array}{cc;{2pt/2pt}cc}
		0 & 0 & 1 & -\frac{1}{2} \\ 
            0 & 0 & 0 & 0 \\ \hdashline[2pt/2pt]
            1 & 0 & \frac{1}{2} & \frac{1}{2} \\ 
            -\frac{1}{2} & 0 & \frac{1}{2} & \frac{1}{2}
    \end{array}\right],\;
    B=\left[\begin{array}{cc;{2pt/2pt}cc}
		\frac{1}{2} & 0 & 1 & -\frac{1}{2} \\ 
            0 & 0 & 0 & 0 \\ \hdashline[2pt/2pt]
            1 & 0 & 1 & 1 \\ 
            -\frac{1}{2} & 0 & 1 & 1%
    \end{array}\right],\\
    J\operatorname{ppt}(A)=\left[\begin{array}{cc;{2pt/2pt}cc}
        -\frac{1}{8} & 0 & \frac{1}{4} & \frac{1}{4} \\ 
        0 & 0 & 0 & 0 \\ \hdashline[2pt/2pt]
        \frac{1}{4} & 0 & -\frac{1}{2} & -\frac{1}{2} \\ 
        \frac{1}{4} & 0 & -\frac{1}{2} & -\frac{1}{2}%
    \end{array}\right],\;
    J\operatorname{ppt}(B)=\left[\begin{array}{cc;{2pt/2pt}cc}
        \frac{7}{16} & 0 & \frac{1}{8} & \frac{1}{8} \\ 
        0 & 0 & 0 & 0 \\ \hdashline[2pt/2pt]
        \frac{1}{8} & 0 & -\frac{1}{4} & -\frac{1}{4} \\ 
        \frac{1}{8} & 0 & -\frac{1}{4} & -\frac{1}{4}%
    \end{array}\right].
\end{gather}
Then $A\leq B$ so that the hypotheses of Theorem \ref{thm:IntroMainThmPaperMonoGPPT} are satisfied and, in particular, we know by this theorem that $A/A_{22}\leq B/B_{22}$ and $J\operatorname{ppt}(A)\leq J\operatorname{ppt}(B)$ since 
\begin{gather}
    B_{22}^+=\begin{bmatrix}
        1 & 1 \\ 
        1 & 1
    \end{bmatrix}^+=\begin{bmatrix}
        \frac{1}{4} & \frac{1}{4} \\ 
        \frac{1}{4} & \frac{1}{4}
    \end{bmatrix}
    \leq \begin{bmatrix}
        \frac{1}{2} & \frac{1}{2} \\ 
        \frac{1}{2} & \frac{1}{2}
    \end{bmatrix}=\begin{bmatrix}
        \frac{1}{2} & \frac{1}{2} \\ 
        \frac{1}{2} & \frac{1}{2}
    \end{bmatrix}^+=A_{22}^+.
\end{gather}
On the other hand, we cannot use Lemma 2.3 in \cite{01CW} to prove that $A/A_{22}\leq B/B_{22}$ since
\begin{gather}
    \ker(B_{22})=\operatorname{span}\left\{\begin{bmatrix}
        -1\\
        1
    \end{bmatrix}\right\}\not\subseteq \operatorname{span}\left\{\begin{bmatrix}
        1\\
        2
    \end{bmatrix}\right\}=\ker(B_{12}).
\end{gather}
\end{example}

We now complete the proof of Theorem \ref{thm:IntroMainThmPaperMonoGPPT} by proving that statements $(b)$ and $(c)$ are equivalent. Before we do this we need the following results from \cite[Observation (7)]{85JR} (cf.\ \cite[Observation 3.1]{95JT}) and \cite[Theorem 2.1]{93HN} (cf.\ \cite[Theorem 5]{91YS}) which is complementary to Lemma \ref{lem:MonotMoorePenosePseudoInv} above.

\begin{lemma}\label{lem:MonotMPInvSpectralCharacterization}
    If $C^*=C, D^*=D\in M_m(\mathbb{F}),$ and $D$ is invertible then
    \begin{gather}
        \det[(1-t)C+tD]\not=0,\;\forall t\in[0,1] \iff \sigma (D^{-1}C)\cap (-\infty,0]=\emptyset,
    \end{gather}
    where $\sigma(D^{-1}C)$ denotes the set of all eigenvalues of $D^{-1}C$. Furthermore, if $C\leq D$ then $\sigma (D^{-1}C)\subseteq \mathbb{R}$ and
        \begin{gather}
        \sigma (D^{-1}C)\cap (-\infty,0]=\emptyset \iff D^{-1}\leq C^{-1}.
    \end{gather}
\end{lemma}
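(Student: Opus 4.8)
The plan is to treat the two equivalences in the statement separately. For the first, I would reduce everything to the characteristic polynomial of $D^{-1}C$: since $D$ is invertible, $\det[(1-t)C+tD]=\det D\cdot\det[(1-t)D^{-1}C+tI]$, and at $t=1$ this equals $\det D\neq 0$, while for $t\in[0,1)$ we may factor out $(1-t)$ to see that $(1-t)C+tD$ is singular exactly when $-t/(1-t)\in\sigma(D^{-1}C)$. Since $t\mapsto -t/(1-t)$ is a strictly decreasing bijection of $[0,1)$ onto $(-\infty,0]$, the segment $\{(1-t)C+tD:t\in[0,1]\}$ contains a singular matrix if and only if $\sigma(D^{-1}C)\cap(-\infty,0]\neq\emptyset$; taking contrapositives gives the first equivalence.

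For the realness claim under $C\le D$, I would argue directly: if $Cx=\lambda Dx$ with $x\neq 0$, then pairing with $x$ gives $(x,Cx)=\lambda(x,Dx)$ with both sides real, so $\lambda\in\mathbb{R}$ unless $(x,Dx)=0$; but in that degenerate case $(x,Cx)=0$ too, hence $(x,(D-C)x)=0$, and since $D-C\ge 0$ this forces $(D-C)x=0$, i.e.\ $Cx=Dx$, whence $(1-\lambda)Dx=0$ and $\lambda=1$ because $D$ is invertible. Thus $\sigma(D^{-1}C)\subseteq\mathbb{R}$.

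For the second equivalence I would route everything through the inertia characterization of Lemma \ref{lem:MonotMoorePenosePseudoInv} together with the first equivalence. If $\sigma(D^{-1}C)\cap(-\infty,0]=\emptyset$, then $0\notin\sigma(D^{-1}C)$, so $C$ is invertible, and by the first equivalence the Hermitian segment $M_t=(1-t)C+tD$ is invertible for every $t\in[0,1]$; since the eigenvalues of $M_t$ vary continuously and never vanish, $i_-(M_t)$ is constant on $[0,1]$, so $i_-(C)=i_-(D)$. As $\ker C=\ker D=\{0\}$ and $C\le D$, Lemma \ref{lem:MonotMoorePenosePseudoInv} yields $D^{-1}=D^+\le C^+=C^{-1}$. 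Conversely, if $C$ is invertible and $D^{-1}\le C^{-1}$, then Lemma \ref{lem:MonotMoorePenosePseudoInv} applied to $C\le D$ gives $i_-(C)=i_-(D)=:k$; because $C\le D$ the segment $M_t$ is non-decreasing in $t$, so each decreasingly ordered eigenvalue $\lambda_j(M_t)$ is non-decreasing in $t$, and for the $m-k$ top indices $\lambda_j(M_t)\ge\lambda_j(C)>0$ while for the remaining indices $\lambda_j(M_t)\le\lambda_j(D)<0$; hence $M_t$ is invertible for all $t\in[0,1]$, and the first equivalence gives $\sigma(D^{-1}C)\cap(-\infty,0]=\emptyset$.

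The routine part is the first equivalence, which is just a change of variable in the characteristic polynomial. The main obstacle, I expect, is the bookkeeping in the second equivalence: both implications must be converted into statements about the inertia of the segment $M_t$, and one has to check carefully that no eigenvalue of $M_t$ crosses zero --- in the forward direction via continuity plus invertibility, and in the reverse direction via the monotonicity of $t\mapsto M_t$ pinned down by the endpoint inertias. The degenerate case $(x,Dx)=0$ in the realness argument is the other place where a little care is required.
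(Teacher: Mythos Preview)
The paper does not give its own proof of this lemma; it is quoted from the literature (specifically \cite[Observation (7)]{85JR} and \cite[Theorem 2.1]{93HN}, with cross-references to \cite{95JT,91YS}). Your proposal supplies a complete and correct self-contained argument. The first equivalence via the reparametrization $t\mapsto -t/(1-t)$ is the standard one; the realness argument handles the degenerate case $(x,Dx)=0$ properly; and routing the second equivalence through Lemma~\ref{lem:MonotMoorePenosePseudoInv} together with the eigenvalue monotonicity of the Hermitian segment $M_t=C+t(D-C)$ is clean and correct (the Courant--Fischer min-max principle gives the monotonicity of the ordered eigenvalues, and the equal endpoint inertias pin down which eigenvalues stay positive and which stay negative). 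So your proof is a legitimate substitute for the citations the paper relies on.
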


\begin{proof}[Proof of $(b)\Leftrightarrow (c)$ in Theorem \ref{thm:IntroMainThmPaperMonoGPPT}]
    Assume the hypotheses in Theorem \ref{thm:IntroMainThmPaperMonoGPPT}. We begin by proving $(c)\Rightarrow (b)$ in Theorem \ref{thm:IntroMainThmPaperMonoGPPT}. To do this we can assume without loss of generality that $\mathbb{F}=\mathbb{C}$. Observe that, since $A_{22}^*=A_{22}, B_{22}^*=B_{22}, 0\leq B_{22}-A_{22}$, then it follows that the holomorphic $n_{2}\times n_{2}$-matrix-valued function $H:\mathbb{C}^+\rightarrow M_{n_2}(\mathbb{C})$ defined by
    \begin{gather*}
        H(t)=(1-t)A_{22}+tB_{22}=A_{22}+t(B_{22}-A_{22}),\;t\in \mathbb{C}
    \end{gather*}
    is a matrix-valued Herglotz-Nevanlinna function on the open upper half of the complex plane $\mathbb{C}^+=\{t\in \mathbb{C}:\operatorname{Im}z>0\}$. Let
    \begin{gather*}
        r=\operatorname{rank} H(i).
    \end{gather*}
    In the case $r=0$, it follows that $H(i)=0$ and hence $A_{22}=B_{22}=0$ so that $(b)\Leftrightarrow (c)$ in Theorem \ref{thm:IntroMainThmPaperMonoGPPT} is true in this case. Now assume that $r\not=0$. Then by well-known theorems on matrix-valued Herglotz-Nevanlinna functions (see \cite{00GT, 12FKa, 12FKb, 17FK}), it follows that there exists a unitary matrix $U\in M_{n_2}(\mathbb{C})$ and matrices $H_0^*=H_0, H_1^*=H_1\in M_{r}(\mathbb{C})$ with $0\leq H_1$ such that
    \begin{gather*}
        H(t)=U^*\begin{bmatrix}
            h(t) & 0\\
            0 & 0
        \end{bmatrix}U,\;\;h(t)=H_0+tH_1,\;t\in \mathbb{C}, 
    \end{gather*}
    the set
    \begin{gather*}
        Z=\{t\in \mathbb{C}:\operatorname{rank} H(t)\not=r\}=\{t\in \mathbb{C}:\det h(t)=0\}
    \end{gather*}
    contains a finite number of elements with $Z\subseteq \mathbb{R}$, and the function $H^+:\mathbb{C}\setminus Z\rightarrow  M_{n_2}(\mathbb{C})$ defined by
    \begin{gather*}
        H^+(t)=H(t)^+=U^*\begin{bmatrix}
            h(t)^{-1} & 0\\
            0 & 0
        \end{bmatrix}U,\;t\in \mathbb{C}\setminus Z
    \end{gather*}
    is a holomorphic $n_{2}\times n_{2}$-matrix-valued function (in  which $-H^+$ is a matrix-valued Herglotz-Nevanlinna function on $\mathbb{C}^+$). Moreover, it follows that $H^+$ is continuously differentiable on $\mathbb{R}\setminus Z$ with $H^+(t)^*=H^+(t)$ for all $t\in \mathbb{R}\setminus Z$ and
    \begin{gather*}
        (H^+)'(t)=\frac{d}{dt}H(t)^+=U^*\begin{bmatrix}
            \frac{d}{dt}h(t)^{-1} & 0\\
            0 & 0
        \end{bmatrix}U=U^*\begin{bmatrix}
            -h(t)^{-1}h'(t)h(t)^{-1} & 0\\
            0 & 0
        \end{bmatrix}U\\
        =-H^+(t)H'(t)H^+(t)=-H^+(t)(B_{22}-A_{22})H^+(t)\leq 0,\;\;t\in \mathbb{R}\setminus Z.
    \end{gather*}
    Hence, it follows immediately from this that $\operatorname{rank}H(t)$ is constant on an interval $[a,b]\subseteq \mathbb{R}$ if and only if $[a,b]\subseteq \mathbb{R}\setminus Z$, in which case
    \begin{gather*}
        H(b)^+=H(a)^++\int_{a}^b (H^+)'(t)dt\leq H(a)^+.
    \end{gather*}
    This proves the statement $(c)\Rightarrow (b)$ in Theorem \ref{thm:IntroMainThmPaperMonoGPPT}.

    We will now prove the statement $(b)\Rightarrow (c)$ in Theorem \ref{thm:IntroMainThmPaperMonoGPPT}. Suppose that $(B_{22})^+\leq (A_{22})^+$. If $A_{22}=B_{22}=0$ then $(b)\Rightarrow (c)$ in Theorem \ref{thm:IntroMainThmPaperMonoGPPT} in this case. Hence, suppose $A_{22}$ and $B_{22}$ are not both the zero matrix. Then by Lemma \ref{lem:MonotMoorePenosePseudoInv} it follows that $0\not= r_0=\operatorname{rank}A_{22}=\operatorname{rank}B_{22}$ and there exists a matrix $V\in M_{n_2}(\mathbb{\mathbb{F}})$ with $V^*=V^{-1}$ and invertible self-adjoint matrices $C,D\in M_{r_0}(\mathbb{F})$ with $0\leq D-C$ such that 
    \begin{gather*}
        A_{22}=V^*\begin{bmatrix}
            C & 0\\
            0 & 0
        \end{bmatrix}V,\; B_{22}=V^*\begin{bmatrix}
            D & 0\\
            0 & 0
        \end{bmatrix}V.
    \end{gather*}
    It follows from this that
    \begin{gather*}
        H(t)=(1-t)A_{22}+tB_{22}=V^*\begin{bmatrix}
            (1-t)C+tD & 0\\
            0 & 0
        \end{bmatrix}V.
    \end{gather*}
    Thus, to prove the statement $(c)$ in Theorem \ref{thm:IntroMainThmPaperMonoGPPT}, we need only show that $\det[(1-t)C+tD]\not=0$ for all $t\in [0,1]$. But this follows immediately from Lemma \ref{lem:MonotMPInvSpectralCharacterization} since by the hypothesis that $(B_{22})^+\leq (A_{22})^+$ we must have $D^{-1}\leq C^{-1}$.
\end{proof}

\section*{Acknowledgements}

The authors would like to thank Joseph A.\ Ball for bring to our attention the Potapov-Ginzburg transform in relation to the principal pivot transform. The second author would also like to thank Graeme W.\ Milton and Maxence Cassier for all the helpful discussions that motivated this paper and which lead him to find the reference \cite{00CL}.

\section*{Funding:} This research did not receive any specific grant from funding agencies in the public, commercial, or not-for-profit sectors.

\section*{Declarations of interest:} None.

\bibliographystyle{abbrvnat}
\bibliography{mybibliography}

\end{document}